\documentclass[11pt]{article}

\usepackage[latin1]{inputenc}

\usepackage{amsfonts,amsmath,amssymb,amsthm,graphicx,epsfig,float}
\usepackage[T1]{fontenc}

\usepackage[english,francais]{babel}

\setlength{\paperwidth}{21cm}
\setlength{\paperheight}{29.7cm}
\setlength{\evensidemargin}{0.5cm}
\setlength{\oddsidemargin}{0.5cm}
\setlength{\topmargin}{-1cm}
\setlength{\headsep}{1.5cm}
\setlength{\headheight}{1.5cm}
\setlength{\textheight}{20cm}
\setlength{\textwidth}{15cm}

{
  \newtheorem{theoreme}{Th\'eor\`eme}
  \newtheorem*{theoreme*}{Th\'eor\`eme}
  \newtheorem{lemme}[theoreme]{Lemme}

  \newtheorem{proposition}[theoreme]{Proposition}
\newtheorem*{corollaire*}{Corollaire}
\newtheorem*{proposition*}{Proposition}
\theoremstyle{remark}
  \newtheorem*{remarque*}{Remarque}
}

\newcounter{ex}

\newenvironment{rem*}{
  \noindent\textbf{Remarque. }}{}



\newcommand{\Cc}{\mathbb{C}}
\newcommand{\Nn}{\mathbb{N}}

\newcommand{\Zz}{\mathbb{Z}}
\newcommand{\Pp}{\mathbb{P}}

\title{{\bf Endomorphismes aléatoires dans les espaces projectifs II}}
\author{Henry de Thélin}
\date{}

\begin{document}
\maketitle


\def\figurename{{Fig.}}%
\def\proofname{Preuve}
\def\contentsname{Sommaire}%

\begin{abstract}

Nous étudions des suites aléatoires d'endomorphismes holomorphes de $\Pp^k(\Cc)$.

\end{abstract}

\selectlanguage{english}
\begin{center}
{\bf{ }}
\end{center}

\begin{abstract}

We study random holomorphic endomorphisms of $\Pp^k(\Cc)$.

\end{abstract}

\selectlanguage{francais}

Mots-clefs: dynamique complexe, applications aléatoires, entropie.

Classification: 32U40, 32H50.

\section*{{\bf Introduction}}
\par

A partir d'un endomorphisme holomorphe de $\Pp^k(\Cc)$, $f$, de degré
$d \geq 2$,  Forn{\ae}ss et Sibony ont défini le courant de
Green $T$ associé à $f$ (voir \cite{FS} et \cite{FS1}), dont le
support est l'ensemble de Julia de $f$. Si $\omega$ désigne la forme de Fubini-Study de $\Pp^k(\Cc)$, le courant de Green est obtenu comme limite au sens des courants de la suite $\frac{(f^n)^* \omega}{d^n}$.

Ce courant possède un potentiel continu: on peut donc définir son auto-intersection $\mu= T^k$ (voir \cite{FS}). La mesure $\mu$ ainsi obtenue est l'unique
mesure d'entropie maximale $k \log(d)$ (voir \cite{BD2}) et elle a ses
exposants de Lyapunov minorés par $\frac{\log(d)}{2}$ (voir
\cite{BD1}). Par ailleurs, $\mu$ est la limite de la suite de probabilités $\frac{(f^n)^* \omega^k}{d^{kn}}$.

Les convergences des suites $\frac{(f^n)^* \omega}{d^n}$ et $\frac{(f^n)^* \omega^k}{d^{kn}}$ ont été généralisées dans plusieurs directions. L'une d'entre elle consiste à remplacer $f^n$ par $f_n \circ \dots \circ f_0$ où les $f_n$ sont soit des endomorphismes holomorphes aléatoires proches d'un endomorphisme holomorphe $f$ (voir \cite{FS2} et \cite{FW}), soit, dans le cas des mesures, des applications méromorphes qui vérifient certaines propriétés (voir \cite{DS2}). Dans l'article précédent (voir \cite{Det}), nous avons généralisé ce type de résultat. Précisons tout d'abord les théorèmes que nous avons obtenus.

L'ensemble des applications rationnelles de degré $d$ de $\Pp^k(\Cc)$ forme un espace projectif $\Pp^N(\Cc)$ où $N=(k+1) \frac{(d+k)!}{d!k!} -1$. Dans cet espace $\Pp^N(\Cc)$, on notera $\mathcal{H}_d$ les points qui correspondent à des endomorphismes holomorphes de degré $d$ de $\Pp^k(\Cc)$ et $\mathcal{M}$ le complémentaire de $\mathcal{H}_d$ dans $\Pp^N(\Cc)$.

 Considérons $F$ une application mesurable de $\Pp^N(\Cc)$ dans $\Pp^N(\Cc)$ et $\Lambda$ une mesure ergodique et invariante par $F$ (par exemple $F$ un endomorphisme holomorphe de $\Pp^N(\Cc)$ et $\Lambda$ sa mesure de Green). Si $f_0$ est un point de $\Pp^N(\Cc)$ (que l'on prendra générique pour $\Lambda$), on peut considérer la suite $f_n=F^n(f_0)$. Cela donne une suite d'applications rationnelles qui suit en quelque sorte une loi dictée par $\Lambda$.

Pour $f_0 \in \Pp^N(\Cc)$ un endomorphisme holomorphe de $\Pp^k(\Cc)$, on notera $f_i= F^{i} (f_0)$ ($i \in \Nn$) et $F_n$ la composée $F_n= f_n \circ \cdots \circ f_0$ (pour $n \in \Nn$). On a montré dans \cite{Det} le

\begin{theoreme*}

On suppose que 

$$\int \log dist(f, \mathcal{M}) d \Lambda(f) > - \infty.$$

Alors il existe un ensemble $A$ de mesure pleine pour $\Lambda$ tel que pour tout endomorphisme holomorphe $f_0$ de $\Pp^k(\Cc)$ avec $f_0 \in A$, on ait $\frac{F_n^* \omega}{d^{n+1}}$ qui converge vers un courant $T(f_0)$. Ce courant est appelé courant de Green aléatoire (associé à $f_0$).

\end{theoreme*}

Dans \cite{Det}, on a vu que le courant de Green aléatoire ci-dessus est à potentiel continu: on peut donc définir son auto-intersection $T(f_0)^l$ pour $l$ compris entre $1$ et $k$. Pour $l=k$, on appelle $\mu(f_0)=T(f_0)^k$ mesure de Green aléatoire (associée à $f_0$). 

Rappelons (voir \cite{Det}) que l'ensemble $A$ ci-dessus est l'ensemble des bons points pour le théorème de Birkhoff pour la mesure $\Lambda$ et la fonction intégrable $\log dist(f, \mathcal{M})$. Il vérifie $F(A) \subset A$. En particulier, dès que $f_0$ est dans $A$, on peut définir les courants $T(f_i)^l$ pour $i \in \Nn$. Ces courants ont des propriétés d'invariance: on a $d^{-l} f_i^* T(f_{i+1})^l=T(f_i)^l$ et $(f_i)_* T(f_i)^l= d^{k-l} T(f_{i+1})^l$. Grâce à ces invariances, nous avons obtenu dans \cite{Det} un théorème de mélange aléatoire: 

\begin{theoreme}{\label{melange}}

On considère une suite $(f_n)_{n \in \Nn}$ d'endomorphismes holomorphes de degrés $d \geq 2$ et une suite de probabilités $(\mu(f_n))_{n \in \Nn}$ telle que pour tout $n \in \Nn$ on ait $f_n^*(\mu(f_{n+1}))=d^k \mu(f_{n})$ et $\mu(f_n)=(\omega + dd^c g_n)^k$ avec $g_n$ des fonctions continues.

Alors pour $\varphi \in L^{\infty}(\Pp^k)$ et $\psi \in DSH(\Pp^k)$, on a

$$| \langle \mu(f_0), (f_{n-1} \circ \cdots \circ f_0)^* \varphi \psi \rangle - \langle \mu(f_n) , \varphi \rangle \langle \mu(f_0), \psi \rangle | \leq C d^{-n} (1+ \| g_n \|_{\infty}  )^2 \|  \varphi \|_{\infty} \| \psi \|_{\mbox{DSH}}.$$

Ici $C$ est une constante qui ne dépend que de $\Pp^k$.

\end{theoreme}

Classiquement, les applications aléatoires peuvent se voir d'une autre façon: avec un produit semi-direct. C'est ce que nous allons faire dans cet article. Les produits semi-directs ont été étudiés dans \cite{Jo1} et \cite{Jo2} dans le cas où les fibres sont de dimension $1$. Ici on considère des fibres de dimension quelconque. Soit $X= \Pp^N(\Cc) \times \Pp^k(\Cc)$ et $\tau:X \longrightarrow X$ définie par $\tau(f,x)=(F(f), f(x))$. Pour $f \in A$, on a des mesures $\mu(f)$ et elles ont comme propriété d'invariance $f_* \mu(f)= \mu(F(f))$ et $f^* \mu(F(f))=d^k \mu(f)$.

Sur l'espace $X$, on peut définir (voir le paragraphe \ref{mesure}) une mesure $\alpha$ telle que pour tout borélien $B$ de $X$

$$\alpha(B):= \int \mu(f)(B \cap \{f\} \times \Pp^k(\Cc)) d \Lambda(f)$$

où l'on identifie $\{f\} \times \Pp^k(\Cc)$ avec $\Pp^k(\Cc)$.

On obtient alors

\begin{proposition} On suppose que 

$$\int \log dist(f, \mathcal{M}) d \Lambda(f) > - \infty.$$

La mesure $\alpha$ ci-dessus est bien définie, elle est invariante par $\tau$ et ergodique. Si $\Lambda$ est mélangeante, $\alpha$ l'est aussi.

\end{proposition}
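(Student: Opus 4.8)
The plan is to treat the four assertions separately, the first two being essentially formal and the last two resting on Theorem \ref{melange}. \emph{Bonne définition et invariance.} Je commencerais par vérifier que $f\mapsto\mu(f)(B\cap\{f\}\times\Pp^k(\Cc))$ est $\Lambda$-mesurable: cela résulte de la dépendance mesurable de $f\mapsto\mu(f)$ sur $A$ (les courants $T(f)$, donc $\mu(f)=T(f)^k$, étant construits dans \cite{Det} comme limites d'expressions dépendant mesurablement de $f$), de sorte que $\alpha$ est la mesure sur $X$ de première marginale $\Lambda$ et de mesures conditionnelles $\mu(f)$ le long des fibres. Pour montrer $\tau_*\alpha=\alpha$, il suffit de tester contre $\Phi\in C^0(X)$ et d'écrire, à l'aide de la désintégration,
$$\int\Phi\circ\tau\,d\alpha = \int_{\Pp^N} \Big( \int_{\Pp^k}\Phi(F(f),f(x))\,d\mu(f)(x) \Big) d\Lambda(f).$$
L'intégrale intérieure vaut $\int_{\Pp^k}\Phi(F(f),y)\,d(f_*\mu(f))(y)=\int_{\Pp^k}\Phi(F(f),y)\,d\mu(F(f))(y)$ par l'invariance $f_*\mu(f)=\mu(F(f))$; en posant $\Psi_\Phi(g):=\int_{\Pp^k}\Phi(g,y)\,d\mu(g)(y)$, l'expression devient $\int\Psi_\Phi\circ F\,d\Lambda=\int\Psi_\Phi\,d\Lambda=\int\Phi\,d\alpha$ puisque $\Lambda$ est $F$-invariante. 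Ainsi $\alpha$ est une probabilité $\tau$-invariante.

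\emph{Réduction à des corrélations.} Pour l'ergodicité et le mélange, je testerais les corrélations sur des produits $\Phi=a\otimes\varphi$ et $\Psi=b\otimes\psi$, avec $a,b\in L^\infty(\Pp^N)$, $\varphi$ lisse (donc dans $DSH(\Pp^k)$) et $\psi$ lisse (donc dans $L^\infty(\Pp^k)$); de tels produits engendrent un sous-espace dense de $L^2(\alpha)$. En posant $f_i=F^i(f)$ et $F_{n-1}=f_{n-1}\circ\cdots\circ f_0$, on a $\tau^n(f,x)=(F^n(f),F_{n-1}(x))$, d'où
$$\int\Phi\,(\Psi\circ\tau^n)\,d\alpha = \int_{\Pp^N} a(f)\,b(F^n(f))\,\langle\mu(f),\varphi\cdot(\psi\circ F_{n-1})\rangle\,d\Lambda(f).$$
J'appliquerais alors le Théorème \ref{melange} (en échangeant les rôles de ses deux fonctions test: $\psi\in L^\infty$ dans la position tirée en arrière, $\varphi\in DSH$ dans l'autre) pour remplacer la corrélation fibrée $\langle\mu(f),\varphi\cdot(\psi\circ F_{n-1})\rangle$ par $\langle\mu(F^n(f)),\psi\rangle\,\langle\mu(f),\varphi\rangle$ à une erreur $e_n(f)$ près, avec $|e_n(f)|\le C d^{-n}(1+\|g_{F^n(f)}\|_\infty)^2\|\psi\|_\infty\|\varphi\|_{\mbox{DSH}}$, où $\mu(F^n(f))=(\omega+dd^c g_{F^n(f)})^k$. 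En posant $\tilde a(f)=a(f)\langle\mu(f),\varphi\rangle$ et $\tilde b(g)=b(g)\langle\mu(g),\psi\rangle$ — toutes deux bornées, avec $\int\tilde a\,d\Lambda=\int\Phi\,d\alpha$ et $\int\tilde b\,d\Lambda=\int\Psi\,d\alpha$ — le terme principal s'écrit $\int\tilde a\,(\tilde b\circ F^n)\,d\Lambda$.

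\emph{Conclusion.} Si $\Lambda$ est mélangeante, alors $\int\tilde a\,(\tilde b\circ F^n)\,d\Lambda\to(\int\tilde a\,d\Lambda)(\int\tilde b\,d\Lambda)=(\int\Phi\,d\alpha)(\int\Psi\,d\alpha)$, et joint à l'annulation de l'erreur ceci donne le mélange de $\alpha$. Pour l'ergodicité sous la seule hypothèse que $\Lambda$ est ergodique, je passerais aux moyennes de Cesàro: par le théorème ergodique de von Neumann, $\frac1n\sum_{j<n}\tilde b\circ F^j\to\int\tilde b\,d\Lambda$ dans $L^2(\Lambda)$, donc $\frac1n\sum_{j<n}\int\tilde a\,(\tilde b\circ F^j)\,d\Lambda\to(\int\Phi\,d\alpha)(\int\Psi\,d\alpha)$, ce qui est la décroissance des corrélations au sens de Cesàro caractérisant l'ergodicité de $\tau$. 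Dans les deux cas l'argument se referme une fois la densité des fonctions produits utilisée, le foncteur de Koopman étant une isométrie de $L^2(\alpha)$.

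\emph{Principal obstacle.} Le point délicat sera de contrôler l'erreur intégrée, c'est-à-dire de montrer que $\int a(f)\,b(F^n(f))\,e_n(f)\,d\Lambda(f)\to 0$ (et, pour la version de Cesàro, que ces quantités tendent vers $0$). La difficulté tient à ce que la majoration de $e_n$ fait intervenir $d^{-n}(1+\|g_{F^n(f)}\|_\infty)^2$, la norme du potentiel $\|g_{F^n(f)}\|_\infty$ pouvant a priori être grande. C'est précisément ici qu'intervient l'hypothèse $\int\log dist(f,\mathcal{M})\,d\Lambda>-\infty$: avec le contrôle de $\|g_f\|_\infty$ par des sommes de Birkhoff de $\log dist(\cdot,\mathcal{M})$ obtenu dans \cite{Det}, le théorème de Birkhoff donne $\frac1n\log\|g_{F^n(f)}\|_\infty\to0$ pour $\Lambda$-presque tout $f$, d'où $d^{-n}(1+\|g_{F^n(f)}\|_\infty)^2\to0$ presque partout. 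Je convertirais cette décroissance presque partout en convergence de l'intégrale par troncature: sur $\{\|g_{F^n(f)}\|_\infty\le d^{n/3}\}$ l'intégrande est uniformément $O(d^{-n/3})$, tandis que le complémentaire est de $\Lambda$-mesure tendant vers $0$, sa contribution se traitant grâce à l'intégrabilité de $\|g_\cdot\|_\infty$ issue de \cite{Det}; c'est l'étape demandant le plus de soin.
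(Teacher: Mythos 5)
Your proof follows essentially the same route as the paper's: measurability of $f\mapsto\mu(f)$ via the smooth approximants $F_n^*\omega^k/d^{(n+1)k}$, invariance from $f_*\mu(f)=\mu(F(f))$ together with the $F$-invariance of $\Lambda$, and ergodicity/mixing by applying the fibered mixing theorem (Theorem \ref{melange}) fiberwise so as to reduce the correlations of $\alpha$ to those of $\Lambda$; your use of product test functions plus density is only a cosmetic variant of the paper's direct slicing of smooth functions on $X$. The one soft spot is your handling of the integrated error term: the integrability of $\|g_\cdot\|_\infty$ with respect to $\Lambda$ is not what \cite{Det} provides, and even if it were it would not control the squared factor $(1+\|g_{F^n(f)}\|_\infty)^2$ on the exceptional set. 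The paper's argument is simpler: the error $e_n(f)$ is a difference of two integrals of bounded functions against probability measures, hence is trivially bounded by $2\|\varphi\|_\infty\|\psi\|_\infty$ uniformly in $n$ and $f$; combined with the pointwise estimate $\|g_n\|_\infty\le e^{\epsilon n}$ for $n\ge n_0(f)$ valid for every $f\in A$ (Lemma 19 of \cite{Det}), which gives $e_n(f)\to 0$ on $A$, dominated convergence concludes. Your truncation closes in the same way once the appeal to integrability on the bad set is replaced by this uniform bound.
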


A partir d'une mesure $\beta$ invariante par l'application $\tau$, Abramov et Rohlin ont défini une notion d'entropie mixée $h_{\beta, \mbox{mix}}$ (voir  \cite{AR} et \cite{LW}) ). Cette entropie vérifie une inégalité: si $h_{\mbox{top}}(\Lambda)$ désigne l'entropie topologique aléatoire associée à $\Lambda$ (voir \cite{Ki}, \cite{LW} et le paragraphe \ref{topologique}), on a $h_{\beta, \mbox{mix}} \leq h_{\mbox{top}}(\Lambda)$ par \cite{Ki} et \cite{LW}. Dans cet article, nous montrons que $h_{\mbox{top}}(\Lambda)$ est majorée par $k \log d$ et que l'entropie mixée de $\alpha$ est minorée par $k \log d$. On aura alors

\begin{theoreme} On suppose que $\int \log dist(f, \mathcal{M}) d \Lambda(f) > - \infty.$

Alors

$$h_{\alpha, \mbox{mix}}=h_{\mbox{top}}(\Lambda)= k \log d.$$

\end{theoreme}

La mesure $\alpha$ est donc d'entropie mixée maximale.

Dans un dernier paragraphe, nous montrerons un théorème d'hyperbolicité pour les mesures $\mu(f)$ comme dans \cite{Det1}. Ce théorème donnera une généralisation du résultat de Briend et Duval (voir \cite{BD1}): moralement, lorsque l'on part de $(f_1,x)$ générique pour $\alpha$, on verra que $\| D (f_n \circ \cdots \circ f_1)(x)v \| \gtrsim d^{n/2}$ pour tout vecteur unitaire $v$. Plus exactement, il s'agira de montrer que les exposants de Lyapounov de $\alpha$ associé à un certain cocycle sont minorés par $\frac{\log d}{2}$ quand on a $\int \log dist(f, \mathcal{M}) d \Lambda(f) > - \infty$.

\bigskip

{\bf Remerciements:} C'est avec plaisir que je remercie Jerôme Buzzi pour les discussions que nous avons eues sur le théorème d'Oseledets dans le cas non-intégrable.

\section{\bf Propriétés de la mesure $\alpha$}{\label{mesure}}

Dans ce paragraphe, nous montrons que $\alpha$ est bien définie, invariante par $\tau$ et ergodique. Nous montrerons aussi qu'elle est mélangeante lorsque $\Lambda$ l'est.

\subsection{\bf Définition de $\alpha$}

Rappelons que l'on note $A$ l'ensemble des bons points du théorème de Birkhoff pour la mesure $\Lambda$ et la fonction intégrable $\log dist(f, \mathcal{M})$.

Pour montrer que la mesure $\alpha$ est bien définie, il suffit de voir que pour $B$ borélien de $X$, $f \rightarrow \chi_A(f) \mu(f)(B \cap \{f \} \times \Pp^k)$ est mesurable pour $\Lambda$ ce qui revient à montrer que pour $\varphi$ fonction continue sur $X$ la fonction $f \rightarrow \chi_A(f) \int \varphi(f,x) \mu(f)(x)$ est mesurable pour $\Lambda$.

On a vu, dans l'article précédent (\cite{Det}), que pour $f_0 \in A$, $\mu(f_0)$ est limite de $\mu_n(f_0)=\frac{F_n^* \omega^k}{d^{(n+1)k}}$ où $F_n= f_n \circ \cdots \circ f_0$ et $f_i=F^{i}(f_0)$. Comme $f_0 \in A$, les endomorphismes $f_i$ ne sont pas dans $\mathcal{M}$ (car $F(A) \subset A$ et $A \cap \mathcal{M} = \emptyset$). Les coefficients de la forme lisse $\mu_n(f_0)$ évalués en $x$ dépendent donc de façon mesurable de $f_0$ et $x$. En particulier $f \rightarrow  \chi_A(f) \int \varphi(f,x) d \mu_n(f)(x)$ est mesurable pour $\Lambda$ et par passage à la limite $f \rightarrow \chi_A(f) \int \varphi(f,x) \mu(f)(x)$ aussi.

\subsection{\bf La mesure $\alpha$ est invariante par $\tau$}

Classiquement, c'est la relation $f_* \mu(f)= \mu(F(f))$ qui donne l'invariance de $\alpha$. En effet, si $\varphi$ est une fonction continue sur $X$, on a 

$$\int \varphi \circ \tau d \alpha= \int \int \varphi \circ \tau(f,x) d \mu(f)(x) d \Lambda(f)=\int \int \varphi(F(f),f(x)) d \mu(f)(x) d \Lambda(f).$$

Comme on a $f_* \mu(f)= \mu(F(f))$,

$$\int \varphi \circ \tau d \alpha=\int \int \varphi(F(f),x) d \mu(F(f))(x) d \Lambda(f).$$

Maintenant, si on pose $\psi(f)=\int \varphi(f,x) d \mu(f)(x)$,

$$\int \varphi \circ \tau d \alpha= \int \psi(F(f)) d \Lambda(f)= \int \psi(f) d \Lambda(f)$$

par invariance de $\Lambda$ par $F$. Finalement,

$$\int \varphi \circ \tau d \alpha= \int \int \varphi(f,x) d \mu(f)(x) d \Lambda(f)= \int \varphi d \alpha$$

ce qui signifie que $\alpha$ est invariante par $\tau$.

\subsection{\bf Ergodicité et mélange pour $\alpha$}

Montrons d'abord que le fait que $\Lambda$ est ergodique implique que $\alpha$ l'est aussi. La démonstration va reposer sur le théorème de mélange aléatoire (voir le théorème $2$ dans \cite{Det}).

Soient $\varphi$ et $\psi$ des fonctions $C^{\infty}$ sur $X$. Il s'agit de montrer que 

$$\frac{1}{n} \sum_{i=0}^{n-1} \int \varphi(\tau^{i}(f,x)) \psi(f,x) d \alpha(f,x) \longrightarrow \int \varphi d \alpha \int \psi d \alpha.$$

Soit $a_n(f)=\frac{1}{n} \sum_{i=0}^{n-1} \int \varphi(\tau^{i}(f,x)) \psi(f,x) d \mu(f)(x)$ pour $f \in A$.

On a 

$$\frac{1}{n} \sum_{i=0}^{n-1} \int \varphi(\tau^{i}(f,x)) \psi(f,x) d \alpha(f,x)= \int a_n(f) d \Lambda(f).$$

Si $f_0 \in A$, on a (on note $f_i=F^{i}(f_0)$)

$$a_n(f_0)= \frac{1}{n} \sum_{i=0}^{n-1} \int \varphi(F^{i}(f_0), f_{i-1} \circ \cdots \circ f_0(x)) \psi(f_0,x) d \mu(f_0)(x)$$

ce qui s'écrit, en posant $h_i(x)= \varphi(F^{i}(f_0), x)$

$$a_n(f_0)= \frac{1}{n} \sum_{i=0}^{n-1} \int (f_{i-1} \circ \cdots \circ f_0)^* h_i(x) \psi(f_0,x) d \mu(f_0)(x).$$

Utilisons maintenant le théorème $2$ de \cite{Det}. On a (en notant $\psi$ au lieu de $x \rightarrow \psi(f_0,x)$)

$$| \langle \mu(f_0), (f_{i-1} \circ \cdots \circ f_0)^* h_i \psi \rangle - \langle \mu(f_i) , h_i \rangle \langle \mu(f_0), \psi \rangle | \leq C d^{-i} (1+ \| g_i \|_{\infty}  )^2 \|  h_i \|_{\infty} \| \psi \|_{\mbox{DSH}}$$

c'est-à-dire

$$\left| a_n(f_0) - \frac{1}{n} \sum_{i=0}^{n-1} \langle \mu(f_i) , h_i \rangle \langle \mu(f_0), \psi \rangle \right| \leq \frac{C}{n} \sum_{i=0}^{n-1}  d^{-i} (1+ \| g_i \|_{\infty}  )^2 \|  h_i \|_{\infty} \| \psi \|_{\mbox{DSH}}$$

avec $C$ qui ne dépend que de $\Pp^k(\Cc)$. La norme sup $\|  h_i \|_{\infty} $ est plus petite que $\| \varphi \|_{\infty}$, il reste donc à contrôler $g_i$.

Soit $\epsilon > 0$. On a démontré dans le lemme $19$ de \cite{Det} qu'il existe $n_0 \in \Nn$ avec $ \| g_n \|_{\infty} \leq e^{\epsilon n}$ pour $n \geq n_0$. Pour $n \geq n_0$, on a alors

\begin{equation*}
\begin{split}
&\left| a_n(f_0) - \frac{1}{n} \sum_{i=0}^{n-1} \langle \mu(f_i) , h_i \rangle \langle \mu(f_0), \psi \rangle \right| \\
& \leq 
\frac{C}{n} \sum_{i=0}^{n_0-1}  d^{-i} (1+ \| g_i \|_{\infty}  )^2 \| \varphi \|_{\infty} \| \psi \|_{\mbox{DSH}}+
\frac{C}{n} \sum_{i=n_0}^{n-1}  d^{-i} (e^{2 \epsilon i})^2 \| \varphi \|_{\infty} \| \psi \|_{\mbox{DSH}} \\
\end{split}
\end{equation*}

qui converge vers $0$ quand $n$ tend vers l'infini.

On vient de montrer que pour $f_0 \in A$, $\alpha_n(f_0)=a_n(f_0) - \frac{1}{n} \sum_{i=0}^{n-1} \langle \mu(f_i) , h_i \rangle \langle \mu(f_0), \psi \rangle$  converge vers $0$. 

Par ailleurs, comme les $\mu(f_i)$ sont des probabilités, on a $| \alpha_n(f_0) | \leq 2 \| \varphi \|_{\infty} \| \psi \|_{\infty} $ et alors 

$$\int \alpha_n(f_0) d \Lambda(f_0)= \int a_n(f_0) d \Lambda(f_0)   - \frac{1}{n} \sum_{i=0}^{n-1} \int \langle \mu(f_i) , h_i \rangle \langle \mu(f_0), \psi \rangle  d \Lambda(f_0)$$

 tend vers $0$.

Maintenant, si on note $\alpha(f)=\int \varphi(f,x) d \mu(f)(x)$ et $\beta(f)= \int \psi(f,x) d \mu(f)(x)$, on a 

$$\frac{1}{n} \sum_{i=0}^{n-1} \int \langle \mu(f_i) , h_i \rangle \langle \mu(f_0), \psi \rangle  d \Lambda(f_0)= \frac{1}{n} \sum_{i=0}^{n-1} \int \alpha(F^{i}(f)) \beta(f) d \Lambda(f) $$

qui converge vers 

$$\int \alpha(f) d \Lambda(f) \int \beta(f) d \Lambda(f)$$

car $\Lambda$ est ergodique.

Finalement, on a montré que 

$$\int a_n(f) d \Lambda(f)=\frac{1}{n} \sum_{i=0}^{n-1} \int \varphi(\tau^{i}(f,x)) \psi(f,x) d \alpha(f,x)$$

converge vers 

$$\int \alpha(f) d \Lambda(f) \int \beta(f) d \Lambda(f)=\int \varphi d \alpha \int \psi d \alpha.$$

C'est ce que l'on voulait démontrer.

Pour montrer que $\alpha$ est mélangeante lorsque $\Lambda$ l'est, il suffit de montrer que

$$\int \varphi(\tau^{n}(f,x)) \psi(f,x) d \alpha(f,x) \longrightarrow \int \varphi d \alpha \int \psi d \alpha$$

pour $\varphi$ et $\psi$ des fonctions $C^{\infty}$.

En reprenant la preuve précédente en  enlevant tous les $\frac{1}{n} \sum_{i=0}^{n-1}$, on voit que cela revient au fait que (on garde les mêmes notations)

$$ \int \alpha(F^{n}(f)) \beta(f) d \Lambda(f) \longrightarrow \int \alpha(f) d \Lambda(f) \int \beta(f) d \Lambda(f).$$

Mais cette convergence est vraie car $\Lambda$ est mélangeante.

\section{\bf{Entropie}}

Dans un premier paragraphe, nous allons rappeler la définition d'entropie mixée (ou relative) introduite par Abramov et Rohlin (voir \cite{AR} et \cite{LW}) ainsi que la formule qui relie l'entropie métrique de $\alpha$, celle de $\Lambda$ et l'entropie mixée. Dans le second, nous définirons l'entropie topologique aléatoire $h_{\mbox{top}}(\Lambda)$ et nous en donnerons une majoration par $k \log d$. Dans le troisième paragraphe, nous montrerons que l'entropie mixée de $\alpha$ est maximale et vaut $k \log d$. Cela généralise certains résultats de Jonsson au cas où les fibres du produit semi-direct ont une dimension plus grande que $1$ (voir \cite{Jo2}).

\subsection{{\bf Entropie mixée}}{\label{mixee}}

Commençons par rappeler la définition d'entropie mixée associée à la mesure $\alpha$. Cette entropie a été introduite par Abramov et Rohlin dans \cite{AR} dans le cas où $\alpha$ est un produit de mesure. Cette définition a ensuite été étendue au cadre qui nous concerne par Ledrappier et Walters (voir \cite{LW} et aussi \cite{Bo} pour la définition que l'on va prendre). 

On a tout d'abord (voir \cite{Bo} théorème 2.2)

\begin{proposition}

Soit $\xi$ une partition finie de $\Pp^k(\Cc)$. Pour $\Lambda$ presque tout $f_1$ la limite suivante 

$$\lim_{n \rightarrow \infty} \frac{1}{n} H_{\mu(f_1)} \left( \bigvee_{i=0}^{n-1} (f_i \circ \cdots \circ f_1)^{-1}(\xi) \right)$$

existe et est constante. Sa valeur est notée $h_{\alpha, \mbox{mix}}(\xi)$.

Ici par convention $(f_i \circ \cdots \circ f_1)^{-1}$ vaut l'identité si $i=0$.

\end{proposition}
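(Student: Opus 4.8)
Posons, pour $f_1 \in A$ et $n \geq 1$,
$$\xi_n(f_1) = \bigvee_{i=0}^{n-1} (f_i \circ \cdots \circ f_1)^{-1}(\xi) \quad \text{et} \quad a_n(f_1) = H_{\mu(f_1)}(\xi_n(f_1)),$$
où $f_i = F^{i-1}(f_1)$ et avec la convention que $f_0 \circ \cdots \circ f_1$ est l'identit�. Le plan est d'appliquer le th�or�me ergodique sous-additif de Kingman au syst�me $(\Pp^N(\Cc), F, \Lambda)$, qui est ergodique par hypoth�se, � la suite $(a_n)$. Pour cela, la premi�re �tape est d'�tablir la relation de sous-additivit�
$$a_{n+m}(f_1) \leq a_n(f_1) + a_m(F^n(f_1)) \quad \text{pour } \Lambda\text{-presque tout } f_1.$$

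La cl� est une d�composition de la partition raffin�e. En s�parant la r�union d�finissant $\xi_{n+m}(f_1)$ selon $i < n$ et $i \geq n$, et en �crivant pour $i \geq n$ la compos�e $f_i \circ \cdots \circ f_1 = (f_i \circ \cdots \circ f_{n+1}) \circ (f_n \circ \cdots \circ f_1)$, j'utiliserais $f_{n+j} = F^{j-1}(F^n(f_1))$ pour reconna�tre que $\bigvee_{i=n}^{n+m-1}(f_i \circ \cdots \circ f_1)^{-1}(\xi) = (f_n \circ \cdots \circ f_1)^{-1}(\xi_m(F^n(f_1)))$. On obtient ainsi
$$\xi_{n+m}(f_1) = \xi_n(f_1) \vee (f_n \circ \cdots \circ f_1)^{-1}(\xi_m(F^n(f_1))).$$
Je conclurais la sous-additivit� avec deux faits classiques: d'une part $H_\nu(\mathcal{P} \vee \mathcal{Q}) \leq H_\nu(\mathcal{P}) + H_\nu(\mathcal{Q})$, d'autre part $H_{\mu(f_1)}(g^{-1}(\mathcal{P})) = H_{g_* \mu(f_1)}(\mathcal{P})$. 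Comme la relation $f_* \mu(f) = \mu(F(f))$ donne par it�ration $(f_n \circ \cdots \circ f_1)_* \mu(f_1) = \mu(F^n(f_1))$, le second terme devient exactement $a_m(F^n(f_1))$, ce qui �tablit la relation voulue.

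Il reste l'int�grabilit� et la conclusion. Comme $\xi$ est finie, $0 \leq a_n(f_1) \leq n \log(\#\xi)$, donc $a_1 \in L^1(\Lambda)$ et les hypoth�ses de Kingman sont r�unies; on obtient que $\frac{1}{n}a_n(f_1)$ converge $\Lambda$-presque partout vers une fonction $F$-invariante, constante par ergodicit� de $\Lambda$, dont la valeur est $\inf_n \frac{1}{n}\int a_n\, d\Lambda$. L'obstacle principal me para�t �tre non pas le calcul sous-additif mais le contr�le de la mesurabilit� de $f_1 \mapsto a_n(f_1)$, n�cessaire pour appliquer Kingman, et le fait de travailler sur l'ensemble de mesure pleine $A$: il faut v�rifier que $f_1 \mapsto \mu(f_1)(P)$ est mesurable pour chaque atome $P$ de $\xi_n(f_1)$, ce qui d�coule de la d�pendance mesurable de $\mu(f)$ en $f$ �tablie au paragraphe \ref{mesure} et du caract�re holomorphe des $f_i$ pour $f_1 \in A$, et que les relations d'invariance $f_* \mu(f) = \mu(F(f))$ ainsi que $F(A) \subset A$ valent bien sur $A$, de sorte que toutes les quantit�s sont d�finies le long de l'orbite de $f_1$.
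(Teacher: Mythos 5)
Your argument is correct, but it is worth noting that the paper does not actually prove this proposition: it simply invokes Theorem 2.2 of \cite{Bo}, so the burden of proof is delegated to Bogensch\"utz. What you have written is precisely the standard proof of that cited result, and it is also, almost word for word, the argument the paper \emph{does} spell out a page later for the topological analogue (the proposition defining $h(\beta)$, proved via $b_{n+m}(f_1)\leq b_n(f_1)+b_m(F^n(f_1))$ and Kingman). Your key decomposition
$$\bigvee_{i=n}^{n+m-1}(f_i\circ\cdots\circ f_1)^{-1}(\xi)=(f_n\circ\cdots\circ f_1)^{-1}\Bigl(\xi_m(F^n(f_1))\Bigr),$$
combined with $(f_n\circ\cdots\circ f_1)_*\mu(f_1)=\mu(F^n(f_1))$ (iterating $f_*\mu(f)=\mu(F(f))$, valid on $A$ since $F(A)\subset A$), does yield the subadditivity, and the uniform bound $0\leq a_n\leq n\log(\#\xi)$ settles integrability; ergodicity of $\Lambda$ then makes the Kingman limit constant. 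You are also right that the only delicate point is the measurability of $f_1\mapsto a_n(f_1)$: this needs $f\mapsto\mu(f)(B(f))$ to be measurable for atoms $B(f)$ that themselves depend on $f$, which follows from the joint measurability of $(f,x)\mapsto\chi_{A_{j_0}}(x)\prod_i\chi_{A_{j_i}}(f_i\circ\cdots\circ f_1(x))$ together with the measurable dependence of $\mu(f)$ on $f$ established in the paper's first section (extended from continuous to bounded Borel integrands by a monotone class argument). In short: your route is the expected one; the paper's "proof" is a citation, and your sketch reconstructs what lies behind it consistently with the method the paper uses elsewhere.
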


On définit alors l'entropie mixée par 

$$h_{\alpha, \mbox{mix}}= \sup h_{\alpha, \mbox{mix}}(\xi)$$

où le sup est pris sur l'ensemble des partitions finies $\xi$ de $\Pp^k(\Cc)$.

On a maintenant trois quantités: les entropies métriques $h_{\alpha}(\tau)$ et $h_{\Lambda}(F)$ ainsi que l'entropie mixée $h_{\alpha, \mbox{mix}}$. Si on note $\pi$ la projection de $\Pp^N(\Cc) \times \Pp^k(\Cc)$ sur $\Pp^N(\Cc)$, on a $\pi_* \alpha= \Lambda$ et les quantités précédentes sont donc reliées par une formule qui est due à Abramov et Rohlin (voir \cite{AR}):

\begin{theoreme} (Abramov-Rohlin \cite{AR})

On a $h_{\alpha}(\tau)=h_{\Lambda}(F) + h_{\alpha, \mbox{mix}}$.

\end{theoreme}

Dans \cite{AR}, le théorème ci-dessus est donné dans le cas où $\alpha$ est le produit de deux mesures. Cela a été généralisé à notre cadre par Ledrappier-Walters (voir \cite{LW}) et Bogenschütz-Crauel (voir \cite{BC}).

\subsection{{\bf Entropie topologique}}{\label{topologique}}

Commençons par rappeler la définition d'entropie topologique dans le cadre des applications aléatoires (voir \cite{Ki} p.67 et suivantes).

Si $\beta$ est un recouvrement de $\Pp^k(\Cc)$ par des ouverts, on notera $N(\beta)$ le nombre minimal d'ensemble de $\beta$ pour recouvrir $\Pp^k(\Cc)$ (c'est un nombre fini car $\Pp^k(\Cc)$ est compact). On pose $\mathcal{H}(\beta)= \log N(\beta)$. La proposition suivante permet de définir l'entropie topologique:

\begin{proposition}

Il existe une constante $h(\beta)$ telle que pour $\Lambda$-presque tout $f_1$ la limite 
$$\lim_{n \rightarrow \infty} \frac{1}{n} \mathcal{H} \left(\bigvee_{i=0}^{n-1} (f_i \circ \cdots \circ f_1)^{-1}(\beta) \right)$$
existe et vaut $h(\beta)$. Ici on prend encore la convention que $f_i \circ \cdots \circ f_1$ vaut l'identité si $i=0$.

\end{proposition}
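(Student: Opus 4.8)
Le plan est de reconna\^{\i}tre, dans la suite consid\'er\'ee, un cocycle sous-additif au-dessus du syst\`eme de base $(A,\Lambda,F)$, d'appliquer le th\'eor\`eme ergodique sous-additif de Kingman, puis d'invoquer l'ergodicit\'e de $\Lambda$ pour obtenir la constance de la limite. Je me placerais sur l'ensemble $A$ de mesure pleine, sur lequel tout est bien d\'efini : rappelons que $F(A)\subset A$ et que pour $f_1\in A$ les applications $f_i=F^{i-1}(f_1)$ sont des endomorphismes holomorphes, donc continus, ne rencontrant pas $\mathcal{M}$, de sorte que les images r\'eciproques du recouvrement $\beta$ par $f_i\circ\cdots\circ f_1$ ont bien un sens. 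Pour $f_1\in A$ je poserais
$$a_n(f_1)=\mathcal{H}\left(\bigvee_{i=0}^{n-1}(f_i\circ\cdots\circ f_1)^{-1}(\beta)\right),$$
et tout le travail consistera \`a montrer que la suite $(a_n)$ est sous-additive et int\'egrable.

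L'\'etape principale, qui sera le c\oe ur de la preuve, est d'\'etablir l'in\'egalit\'e
$$a_{n+m}(f_1)\leq a_n(f_1)+a_m(F^n(f_1)).$$
Je m'appuierais sur deux propri\'et\'es \'el\'ementaires de $\mathcal{H}=\log N$ : d'une part la sous-additivit\'e $\mathcal{H}(\mathcal{U}\vee\mathcal{V})\leq\mathcal{H}(\mathcal{U})+\mathcal{H}(\mathcal{V})$, d'autre part $\mathcal{H}(g^{-1}\mathcal{U})\leq\mathcal{H}(\mathcal{U})$ pour toute application continue $g$ (une image r\'eciproque ne peut qu'amoindrir le nombre de recouvrement). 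Je couperais le join d\'efinissant $a_{n+m}(f_1)$ selon les indices $i=0,\dots,n-1$ et $i=n,\dots,n+m-1$ : le premier bloc est exactement le join d\'efinissant $a_n(f_1)$. Pour le second, je factoriserais $f_i\circ\cdots\circ f_1=(f_i\circ\cdots\circ f_{n+1})\circ G$ avec $G=f_n\circ\cdots\circ f_1$ ; en posant $g_\ell=f_{n+\ell}=F^{\ell-1}(F^n(f_1))$, ce bloc s'\'ecrit $G^{-1}\bigl(\bigvee_{\ell=0}^{m-1}(g_\ell\circ\cdots\circ g_1)^{-1}(\beta)\bigr)$, dont le $\mathcal{H}$ est major\'e par $a_m(F^n(f_1))$. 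La combinaison des deux propri\'et\'es ci-dessus donne alors l'in\'egalit\'e voulue.

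Il resterait \`a v\'erifier les hypoth\`eses du th\'eor\`eme de Kingman. La suite est positive et, en majorant le $\mathcal{H}$ du join par la somme des $\mathcal{H}\bigl((f_i\circ\cdots\circ f_1)^{-1}(\beta)\bigr)\leq\mathcal{H}(\beta)$, elle v\'erifie $0\leq a_n(f_1)\leq n\,\log N(\beta)$ ; en particulier $a_1=\log N(\beta)$ est constante, donc int\'egrable. La mesurabilit\'e de $f_1\mapsto a_n(f_1)$ provient de ce que les $f_i$ d\'ependent holomorphiquement, donc continûment, de $f_1$, et que $N$ est une quantit\'e combinatoire semi-continue sup\'erieurement (voir \cite{Ki}). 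Le th\'eor\`eme de Kingman assurera alors que $\frac{1}{n}a_n(f_1)$ converge $\Lambda$-presque partout vers une fonction $F$-invariante ; comme $\Lambda$ est ergodique, cette limite sera $\Lambda$-presque partout \'egale \`a la constante
$$h(\beta)=\inf_n\frac{1}{n}\int a_n\,d\Lambda=\lim_n\frac{1}{n}\int a_n\,d\Lambda,$$
ce qui est pr\'ecis\'ement l'\'enonc\'e cherch\'e.

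Le point d\'elicat sera le contr\^ole exact du r\'eindexage dans l'\'etape de sous-additivit\'e, et en particulier le traitement de la convention $i=0$ (qui doit bien redonner le terme $G^{-1}(\beta)$ pour $i=n$, c'est-\`a-dire $\ell=0$) ; une fois cette comptabilit\'e men\'ee soigneusement, le reste n'est qu'une application standard du th\'eor\`eme ergodique sous-additif combin\'ee \`a l'ergodicit\'e de $\Lambda$.
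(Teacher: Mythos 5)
Votre preuve est correcte et suit essentiellement la m\^eme d\'emarche que celle du papier : restriction \`a l'ensemble $A$, sous-additivit\'e $b_{n+m}(f_1)\leq b_n(f_1)+b_m(F^n(f_1))$ (que le papier emprunte directement \`a \cite{Ki} p.~69 et que vous d\'etaillez), puis th\'eor\`eme sous-additif de Kingman et ergodicit\'e de $\Lambda$. Votre v\'erification du d\'ecoupage du join et de la convention $i=0$ est exacte.
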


\begin{proof}

On commence par prendre $f_1$ dans l'ensemble $A$ de sorte que tous les $f_i$ soient des endomorphismes holomorphes.

Maintenant, en suivant exactement la preuve de \cite{Ki} p.69 et en posant

$$b_n(f_1)=\mathcal{H} \left( \bigvee_{i=0}^{n-1} (f_i \circ \cdots \circ f_1)^{-1}(\beta) \right)=\mathcal{H} \left( \bigvee_{i=0}^{n-1} (F^{i-1}(f_1) \circ \cdots \circ f_1)^{-1}(\beta) \right),$$

on a

$$b_{n+m}(f_1) \leq b_n(f_1) + b_m(F^n(f_1)).$$

On conclut alors en utilisant le théorème sous-additif de Kingman car $\Lambda$ est ergodique.

\end{proof}

L'entropie topologique aléatoire se définit par

$$h_{\mbox{top}}(\Lambda)=\sup h(\beta).$$

Remarquons que c'est une quantité qui dépend de $F$ et $\Lambda$ que l'on a choisis au départ.

Comme dans \cite{Ki}, on peut définir cette entropie d'une autre façon.

Pour $f_1$ dans l'ensemble $A$, on dira qu'un ensemble $E$ est $(n , \epsilon, f_1)$-séparé si pour tous $x$ et $y$ dans $E$ avec $x \neq y$, on a

$$d_{f_1}^n(x,y):=\max_{i=0, \cdots , n-1} dist( f_i \circ \cdots \circ f_1(x), f_i \circ \cdots \circ f_1(y)) \geq \epsilon.$$

On note $s(n, \epsilon, f_1)$ le cardinal maximal d'un ensemble $(n , \epsilon, f_1)$-séparé dans $\Pp^k(\Cc)$.

Alors 

\begin{proposition}

Pour $\Lambda$ presque tout $f_1$, on a 

$$h_{\mbox{top}}(\Lambda)=\lim_{\epsilon \rightarrow 0} \limsup_{n \rightarrow + \infty} \frac{1}{n} \log s(n, \epsilon, f_1)=\lim_{\epsilon \rightarrow 0} \liminf_{n \rightarrow + \infty} \frac{1}{n} \log s(n, \epsilon, f_1).$$

\end{proposition}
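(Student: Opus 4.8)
Le plan est d'adapter au cadre al\'eatoire l'argument classique de Bowen et Dinaburg reliant le nombre d'ensembles s\'epar\'es au nombre de recouvrement, tel qu'il est pr\'esent\'e dans \cite{Ki}. Tout se ram\`ene \`a deux in\'egalit\'es de comparaison entre $s(n,\epsilon,f_1)$ et $N_n(\beta):=N\left(\bigvee_{i=0}^{n-1}(f_i\circ\cdots\circ f_1)^{-1}(\beta)\right)$, que j'\'etablirais pour $f_1\in A$ (de sorte que tous les $f_i$ soient des endomorphismes holomorphes, car $F(A)\subset A$). Je noterais $g_i=f_i\circ\cdots\circ f_1$ et $B^n_{f_1}(y,\epsilon)=\{x\,:\,d^n_{f_1}(x,y)<\epsilon\}$.

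Premi\`ere in\'egalit\'e: si $\beta$ est un recouvrement dont tous les \'el\'ements sont de diam\`etre $<\epsilon$, alors deux points d'un m\^eme \'el\'ement de $\bigvee_{i=0}^{n-1} g_i^{-1}(\beta)$ v\'erifient $d^n_{f_1}(x,y)<\epsilon$; un ensemble $(n,\epsilon,f_1)$-s\'epar\'e rencontre donc chaque \'el\'ement en au plus un point, d'o\`u $s(n,\epsilon,f_1)\le N_n(\beta)$. Seconde in\'egalit\'e: si $\delta$ est un nombre de Lebesgue de $\beta$ et $\epsilon<\delta/2$, un ensemble $(n,\epsilon,f_1)$-s\'epar\'e maximal $E$ est aussi $(n,\epsilon,f_1)$-couvrant; pour chaque $y\in E$ et chaque $i$, la boule $B(g_i(y),\epsilon)$ est de diam\`etre $\le 2\epsilon<\delta$ donc contenue dans un \'el\'ement de $\beta$, ce qui place $B^n_{f_1}(y,\epsilon)$ dans un \'el\'ement de $\bigvee_{i=0}^{n-1} g_i^{-1}(\beta)$; comme les $B^n_{f_1}(y,\epsilon)$ recouvrent $\Pp^k(\Cc)$, on en d\'eduit un sous-recouvrement de cardinal $\le|E|$, soit $N_n(\beta)\le s(n,\epsilon,f_1)$.

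Je conclurais en passant aux taux exponentiels. De la premi\`ere in\'egalit\'e, en choisissant pour chaque $\epsilon$ un recouvrement $\beta_\epsilon$ de diam\`etre $<\epsilon$ (possible par compacit\'e de $\Pp^k(\Cc)$) et en prenant $\limsup_n\frac1n\log$, on obtient $\limsup_n\frac1n\log s(n,\epsilon,f_1)\le h(\beta_\epsilon)\le h_{\mbox{top}}(\Lambda)$, d'o\`u $\lim_{\epsilon\to0}\limsup_n\frac1n\log s(n,\epsilon,f_1)\le h_{\mbox{top}}(\Lambda)$. De la seconde, pour $\beta$ fix\'e de nombre de Lebesgue $\delta$ et $\epsilon<\delta/2$, le passage \`a $\liminf_n\frac1n\log$ donne $h(\beta)\le\liminf_n\frac1n\log s(n,\epsilon,f_1)$; en faisant $\epsilon\to0$ puis en prenant le sup sur $\beta$, on obtient $h_{\mbox{top}}(\Lambda)\le\lim_{\epsilon\to0}\liminf_n\frac1n\log s(n,\epsilon,f_1)$. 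Puisque $\liminf\le\limsup$ et que $s(n,\epsilon,f_1)$ cro\^it lorsque $\epsilon$ d\'ecro\^it (les limites en $\epsilon$ existent par monotonie), les trois quantit\'es se trouvent encadr\'ees par $h_{\mbox{top}}(\Lambda)$, donc \'egales.

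La principale difficult\'e ne sera pas analytique mais portera sur la gestion du ``presque tout $f_1$'': il faudra un m\^eme ensemble de mesure pleine valable simultan\'ement pour tous les $\epsilon$ et pour le passage au sup sur les recouvrements. Je m'y ram\`enerais en n'utilisant qu'une famille d\'enombrable cofinale de recouvrements (par exemple les $\beta_{1/m}$ et une suite de recouvrements r\'ealisant le sup d\'efinissant $h_{\mbox{top}}(\Lambda)$) et en intersectant les ensembles de mesure pleine sur lesquels chaque $h(\beta)$ existe et est constant (proposition pr\'ec\'edente), le tout intersect\'e avec $A$. Le fait que $h(\beta)$ soit une constante d\'eterministe rend $h_{\mbox{top}}(\Lambda)$ ind\'ependant de $f_1$, ce qui ferme l'encadrement ci-dessus pour presque tout $f_1$.
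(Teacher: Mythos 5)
Your proposal is correct and takes essentially the same route as the paper, which gives no argument of its own but simply refers to Kifer pp.~72--74, where exactly this Bowen--Dinaburg comparison between $(n,\epsilon,f_1)$-separated sets and the covering numbers $N_n(\beta)$ is carried out for random transformations. Your handling of the ``presque tout $f_1$'' issue via a countable cofinal family of covers intersected with $A$ is the right way to make the citation-level argument precise.
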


La preuve est faîte dans \cite{Ki} p.72-74. 

Il y a une inégalité entre l'entropie mixée et l'entropie topologique définie ci-dessus. Elle résulte d'une adaptation facile de \cite{Ki} p. 78 (voir aussi \cite{LW} pour un résultat plus complet dans le cas continu). En effet, on a

\begin{theoreme} 

$$h_{\alpha, \mbox{mix}} \leq h_{\mbox{top}}(\Lambda).$$

\end{theoreme}

Nous verrons dans la suite que $\alpha$ est une mesure d'entropie mixée maximale, c'est-à-dire que l'inégalité du théorème ci-dessus est en fait une égalité.

En utilisant des arguments de Gromov (voir \cite{Gr}), on va d'abord majorer l'entropie topologique $h_{\mbox{top}}(\Lambda)$. On a 

\begin{theoreme}{\label{Gromov}}

$$h_{\mbox{top}}(\Lambda) \leq k \log d.$$

\end{theoreme}

\begin{proof}

On considère $f_1 \in A$ qui vérifie la conclusion de la  proposition précédente.

Soit $\{x_1, \cdots , x_N\}$ un ensemble $(n , \epsilon, f_1)$-séparé dans $\Pp^k(\Cc)$ de cardinal maximal. Pour $i=1 , \cdots , N$ , les $x_i$ donnent des points

$$X_i=(x_i, f_1(x_i), f_2 \circ f_1(x_i), \cdots , f_{n-1} \circ \cdots \circ f_1(x_i))$$

qui sont $\epsilon$-séparés dans $(\Pp^k(\Cc))^n$. Les boules $B(X_i, \frac{\epsilon}{2})$ sont donc disjointes. Si on note

$$\Gamma_n = \{ (x,f_1(x), f_2 \circ f_1(x), \cdots , f_{n-1} \circ \cdots \circ f_1(x)), x \in \Pp^k(\Cc) \}$$

le théorème de Lelong implique que le volume de $\Gamma_n$ intersecté avec une boule $B(X_i, \frac{\epsilon}{2})$ est minoré par $C(k) \left( \frac{\epsilon}{2} \right)^{2k}$. Cela donne une minoration du volume de $\Gamma_n$ par  $C(k) \left( \frac{\epsilon}{2} \right)^{2k}N$.

Majorons maintenant ce volume en utilisant la cohomologie des applications $f_i$. 

Soit $\omega_n=\sum_{i=1}^n \pi_i^* \omega$ où $\pi_i$ ($i=1, \cdots , n$) est la projection de $(\Pp^k(\Cc))^n$ sur sa $i$-ème coordonnée et $\omega$ est la forme de Fubini-Study de $\Pp^k(\Cc)$. Le volume de $\Gamma_n$ est égal à

\begin{equation*}
\begin{split}
\int_{\Gamma_n} \omega_n^k &= \sum_{1 \leq n_1 , \cdots , n_k \leq n} \int_{\Gamma_n} \pi_{n_1}^* \omega \wedge \cdots \wedge \pi_{n_k}^* \omega \\
&=\sum_{1 \leq n_1 , \cdots , n_k \leq n} \int_{\Pp^k(\Cc)} (f_{n_1 -1} \circ \cdots \circ f_1)^* \omega \wedge \cdots \wedge (f_{n_k -1} \circ \cdots \circ f_1)^* \omega.\\
\end{split}
\end{equation*}

Comme $f_1$ est dans $A$, les $f_i$ sont des endomorphismes holomorphes de degré $d$. En particulier, $(f_{n_l -1} \circ \cdots \circ f_1)^* \omega$ est cohomologue à $d^{n_l -1} \omega$ et alors

$$\int_{\Gamma_n} \omega_n^k =\sum_{1 \leq n_1 , \cdots , n_k \leq n} d^{n_1 + \cdots + n_k}d^{-k} \leq n^k d^{-k} d^{nk}.$$

On obtient ainsi

$$N \leq \frac{n^k d^{-k} d^{nk}}{C(k) \left( \frac{\epsilon}{2} \right)^{2k}}$$

ce qui donne le théorème.

\end{proof}

\subsection{{\bf Quelques propriétés de l'entropie mixée}}

Dans ce paragraphe, nous donnons quelques propriétés sur l'entropie qui nous seront utiles pour la suite.

Commençons par énoncer un théorème du type Shannon-McMillan-Breiman pour les applications aléatoires. Soit $\xi= \{A_1, \cdots , A_p \}$ une partition finie de $\Pp^k(\Cc)$ et $f_1$ un endomorphisme dans $A$. On note 
$$I_{\mu(f_1)}(\xi)(x)= -\sum_{i=1}^p \chi_{A_i}(x) \log \mu(f_1)(A_i).$$

Par le théorème 4.2 de \cite{Bo}, on a

\begin{theoreme}

Soit $\xi$ une partition finie. Pour $\alpha$ presque tout $(f_1,x)$

$$\lim_{n \rightarrow \infty}  \frac{1}{n} I_{ \mu(f_1)} \left( \bigvee_{i=0}^{n-1} (f_i \circ \cdots \circ f_1)^{-1}(\xi) \right) (x)=h_{\alpha, \mbox{mix}}(\xi).$$

\end{theoreme}

L'entropie mixée de $\alpha$ est plus petite que $k \log d$. C'est donc une quantité finie et on peut aussi appliquer le théorème 2.1 de \cite{Z} qui est un théorème du type Brin-Katok pour les applications aléatoires. On note ici $B_{d_{f_1}^n}(x, \epsilon)$ la boule de centre $x$ et de rayon $\epsilon$ pour la métrique $d_{f_1}^n$ définie précédemment:

\begin{theoreme}

Pour $\alpha$ presque tout $(f_1,x)$ on a:

\begin{equation*}
\begin{split}
h_{\alpha, \mbox{mix}}&= \lim_{\epsilon \rightarrow 0} \liminf_{n \rightarrow + \infty} - \frac{1}{n} \log \mu(f_1)(B_{d_{f_1}^n}(x, \epsilon)) \\
&= \lim_{\epsilon \rightarrow 0} \limsup_{n \rightarrow + \infty} - \frac{1}{n} \log \mu(f_1)(B_{d_{f_1}^n}(x, \epsilon)). \\
\end{split}
\end{equation*}
\end{theoreme}

Nous montrons maintenant que $\alpha$ est d'entropie mixée maximale. On a même mieux car:

\begin{theoreme}

$$h_{\alpha, \mbox{mix}}=h_{\mbox{top}}(\Lambda)= k \log d.$$

\end{theoreme}

\begin{proof}

Pour démontrer ce théorème il suffit de suivre la preuve du théorème 5.2 de \cite{Jo2} en changeant $\log d$ par $k \log d$ et $\mu_x$ par $\mu(f_1)$. En effet, d'une part la mesure $\mu(f_1)$ ne charge pas les sous-ensembles analytiques de $\Pp^k(\Cc)$ car elle est égale à $T(f_1)^k$ où $T(f_1)$ est un $(1,1)$ courant à potentiel continu. D'autre part, on a la relation $f_1^* \mu(f_2)= d^k \mu(f_1)$ pour $f_1 \in A$ (voir \cite{Det}). 

\end{proof}

\section{\bf{Exposants de Lyapounov et hyperbolicité de la mesure $\alpha$}}

Il s'agit ici d'étudier la dynamique de la suite $f_0, \cdots, f_n$ pour $f_0$ générique pour $\Lambda$. Pour cela, nous allons estimer des exposants de Lyapounov associés à un certain cocycle pour la mesure $\alpha$. 

Commençons par décrire ce cocycle.

\subsection{\bf Cocycle et théorème d'Oseledets}

On note toujours $X= \Pp^N(\Cc) \times \Pp^k(\Cc)$ et $\tau:X \longrightarrow X$ définie par $\tau(f,x)=(F(f), f(x))$. 

Soit 
$$\widehat{X}:= \{ \widehat{\beta}=( \cdots, \beta_{-n} , \cdots , \beta_0 , \cdots , \beta_n, \cdots) \in X^{\Zz} \mbox{ , } \tau(\beta_n)= \beta_{n+1} \mbox{   } \forall n \in \Zz \}.$$

Dans cet espace, $\tau$ induit une application $\sigma$ qui est le décalage à gauche et si on note $\pi$ la projection canonique $\pi(\widehat{\beta})=\beta_0$, la probabilité $\alpha$ se relève en une probabilité $\widehat{\alpha}$ invariante par $\sigma$, ergodique et qui vérifie $\pi_* \widehat{\alpha}= \alpha$.

Si on considère $\mathcal{I}= \{ \beta=(f,x) \in X \mbox{  avec  } x \in I(f) \}$ où $I(f)$ est l'ensemble d'indétermination de $f$, on a

$$\alpha(\mathcal{I})= \int \mu(f)(\mathcal{I} \cap \{f\} \times \Pp^k(\Cc) ) d \Lambda(f)=\int \mu(f)(I(f)) d \Lambda(f)=0$$

car par hypothèse $\int \log dist(f, \mathcal{M}) d \Lambda(f) > - \infty$, ce qui signifie que $I(f)$ est vide pour $\Lambda$ presque tout $f$.

En particulier, si on pose

$$\widehat{X}^*:= \{ \widehat{\beta} \in \widehat{X} \mbox{ , } \beta_n \notin \mathcal{I} \mbox{ , } \forall n \in \Zz \},$$

cet ensemble est invariant par $\sigma$ et on a $\widehat{\alpha}(\widehat{X}^*)=1$.

Maintenant, on munit $\Pp^k(\Cc)$ d'une famille de cartes $(\tau_x)_{x \in \Pp^k(\Cc)}$ telles que $\tau_x(0)=x$, $\tau_x$ est définie sur une boule $B(0, \epsilon_0) \subset \Cc^k$ avec $\epsilon_0$ indépendant de $x$ et la norme de la dérivée première et seconde de $\tau_x$ sur $B(0, \epsilon_0)$ est majorée par une constante indépendante de $x$. Pour construire ces cartes il suffit de partir d'une famille finie $(U_i, \psi_i)$ de cartes de $\Pp^k(\Cc)$ et de les composer par des translations.

Dans toute la suite, si $f: \Pp^k(\Cc) \longrightarrow \Pp^k(\Cc)$ est une application rationnelle, on notera $f_x= \tau_{f(x)}^{-1} \circ f \circ \tau_x$ qui est définie au voisinage de $0$ quand $x$ n'est pas dans
$I(f)$.

Le cocycle auquel nous allons appliquer la théorie de Pesin est le suivant:

\begin{equation*}
\begin{split}
A : & \widehat{X}^* \longrightarrow M_k(\Cc)\\
& \widehat{\beta} \longrightarrow Df_x(0)\\
\end{split}
\end{equation*}

où $M_k(\Cc)$ est l'ensemble des matrices carrées $k \times k$ à coefficients dans $\Cc$ et $\pi(\widehat{\beta})=\beta= (f,x)$. Afin d'avoir un théorème du type Oseledets, nous aurons besoin du lemme suivant:

\begin{lemme}

$$\int \log^+ \| A( \widehat{\beta}) \| d \widehat{\alpha} ( \widehat{\beta}) < + \infty.$$

\end{lemme}

\begin{proof}

Pour $\beta=(f,x)$, on pose $h(\beta)= \log^+ \|Df_x(0) \|$. On a alors si $\pi(\widehat{\beta})=\beta$,

\begin{equation*}
\begin{split}
&\int \log^+ \| A( \widehat{\beta}) \| d \widehat{\alpha} ( \widehat{\beta}) = \int  h(\beta) d \widehat{\alpha} ( \widehat{\beta})\\
&=\int  h \circ \pi ( \widehat{\beta}) d \widehat{\alpha} ( \widehat{\beta})=\int  h( \beta) d \alpha(\beta)\\
&=\int \int \log^+ \|Df_x(0) \| d \mu(f)(x) d \Lambda(f).
\end{split}
\end{equation*}

Maintenant, 

$$ \|Df_x(0) \| \leq C \|Df(x) \| \leq C' dist(f, \mathcal{M})^{-p}$$

(voir la démonstration de la proposition $3$ de \cite{Det}). Le lemme découle donc de l'intégrabilité de la fonction $\log dist(f, \mathcal{M})$ pour la mesure $\Lambda$.

\end{proof}

Grâce à ce lemme, on obtient un théorème du type Oseledets (voir  \cite{FLQ} et \cite{Th} ainsi que le théorème 2.3 de \cite{N}, le théorème 6.1 dans \cite{Du} et \cite{M}):

\begin{theoreme}{\label{pesin}}

 Il existe des réels $\lambda_1 > \lambda_2 > \cdots > \lambda_l \geq - \infty$, des entiers $m_1, \cdots, m_l$ et un ensemble $\widehat{\Gamma}$ de mesure pleine pour $\widehat{\alpha}$ tels que pour $\widehat{\beta} \in \widehat{\Gamma}$ on ait une décomposition de $\Cc^k$ de la forme $\Cc^k= \bigoplus_{i=1}^{l} E_i(\widehat{\beta})$ où les $E_i(\widehat{\beta})$ sont des sous-espaces vectoriels de dimension $m_i$ qui vérifient:

1) $A(\widehat{\beta}) E_i(\widehat{\beta}) \subset E_i(\sigma(\widehat{\beta}))$ avec égalité si $\lambda_i > - \infty$.

2) Pour $v \in E_i(\widehat{\beta}) \setminus \{0 \}$, on a 
$$\lim_{n \rightarrow + \infty} \frac{1}{n} \log \|A(\sigma^{n-1}(\widehat{\beta})) \cdots A(\widehat{\beta}) \| = \lambda_i.$$

Si de plus, $\lambda_i > - \infty$, on a la même limite quand $n$ tend vers $- \infty$.

Pour tout $\gamma > 0$, il existe une fonction $C_{\gamma} :  \widehat{\Gamma} \longrightarrow GL_k(\Cc)$ telle que pour $\widehat{\beta} \in \widehat{\Gamma}$:

1) $\lim_{n \rightarrow \infty} \frac{1}{n} \log \| C^{\pm 1}_{\gamma} (\sigma^n(\widehat{\beta})) \|=0$ (on parle de fonction tempérée).

2) $C_{\gamma}(\widehat{\beta})$ envoie la décomposition standard $\bigoplus_{i=1}^{l} \Cc^{m_i}$ sur $\bigoplus_{i=1}^{l} E_i(\widehat{\beta})$.

3) La matrice $A_{\gamma}(\widehat{\beta})= C_{\gamma}^{-1}(\sigma(\widehat{\beta})) A(\widehat{\beta}) C_{\gamma}(\widehat{\beta})$ est diagonale par bloc $(A^1_{\gamma}(\widehat{\beta}), \cdots, A^l_{\gamma}(\widehat{\beta}))$ où chaque $A^{i}_{\gamma}(\widehat{\beta})$ est une matrice carrée $m_i \times m_i$ et
$$\forall v \in \Cc^{m_i}   \mbox{    } \mbox{  on a   } \mbox{    } e^{\lambda_i - \gamma} \|v\| \leq \| A^{i}_{\gamma}(\widehat{\beta}) v \| \leq e^{\lambda_i + \gamma} \|v\|$$

si $\lambda_i > - \infty$ et  

$$\forall v \in \Cc^{m_l}   \mbox{    } \mbox{     } \mbox{    }  \| A^{l}_{\gamma}(\widehat{\beta}) v \| \leq e^{\gamma} \|v\|$$

si $\lambda_l= - \infty$.

\end{theoreme}

\begin{remarque*}

La dernière estimée est donnée sous une forme que l'on utilisera mais on peut l'améliorer: en fait si on fixe $\gamma>0$ et $B>0$, il existe une fonction $C_{\gamma,B} :  \widehat{\Gamma} \longrightarrow GL_k(\Cc)$ qui vérifie les propriétés ci-dessus et pour la dernière, si on note $A_{\gamma,B}(\widehat{\beta})= C_{\gamma,B}^{-1}(\sigma(\widehat{\beta})) A(\widehat{\beta}) C_{\gamma,B}(\widehat{\beta})$, on a $\forall v \in \Cc^{m_l}   \mbox{    } \mbox{     } \mbox{    }  \| A^{l}_{\gamma,B}(\widehat{\beta}) v \| \leq e^{-B} \|v\|$.

\end{remarque*}

Notons maintenant $g_{\widehat{\beta}}$ la lecture de $f_x$ dans les cartes $C_{\gamma}$ c'est-à-dire $g_{\widehat{\beta}}= C_{\gamma}^{-1}(\sigma(\widehat{\beta}))  \circ f_x \circ C_{\gamma}(\widehat{\beta})$ où $\pi(\widehat{\beta})=\beta=(f,x)$. On considère aussi $C$ et $p \geq 5$ tels que 

$$\|Df(x)\| + \|D^2f(x)\| \leq C dist(f, \mathcal{M})^{-p}$$

(voir la démonstration du lemme $6$ de \cite{Det} et le lemme 2.1 de \cite{DiDu}). Dans l'expression précédente est s'est implicitement placé dans une des cartes $\psi_i$ de $\Pp^k(\Cc)$. Grâce au théorème précédent, on a alors

\begin{proposition}{\label{proposition15}}

Il existe une constante $\epsilon_1$ qui ne dépend que de $\Pp^k(\Cc)$ telle que pour $\widehat{\beta} \in \widehat{\Gamma}$ on ait

1) $g_{\widehat{\beta}}(0)=0$.

2) $D g_{\widehat{\beta}}(0)=A_{\gamma}(\widehat{\beta})$.

3) Si on note $g_{\widehat{\beta}}(w)=D g_{\widehat{\beta}}(0)w + h(w)$, on a 
$$\| Dh(w) \| \leq C \|C_{\gamma}^{-1}(\sigma(\widehat{\beta}))\|  \|C_{\gamma}(\widehat{\beta})\|^2 dist(f, \mathcal{M})^{-p} \|w\|$$
pour $\| w \| \leq \frac{\epsilon_1 dist(f, \mathcal{M})^p}{C \|C_{\gamma}(\widehat{\beta})\|}$.

\end{proposition}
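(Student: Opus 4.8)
Le plan est de traiter d'abord les deux premiers points, qui rel\`event d'un simple calcul, puis d'obtenir l'estim\'ee du troisi\`eme point par un argument de type Taylor.

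Pour les points 1 et 2, je commencerais par observer que $C_{\gamma}(\widehat{\beta})$ et $C_{\gamma}^{-1}(\sigma(\widehat{\beta}))$ sont des applications lin\'eaires, donc envoient $0$ sur $0$. Comme $\tau_x(0)=x$ et $\tau_{f(x)}(0)=f(x)$, on a
$$f_x(0)=\tau_{f(x)}^{-1} \circ f \circ \tau_x(0)=\tau_{f(x)}^{-1}(f(x))=0,$$
d'o\`u $g_{\widehat{\beta}}(0)=C_{\gamma}^{-1}(\sigma(\widehat{\beta}))(0)=0$. Pour le point 2, la r\`egle de d\'erivation des fonctions compos\'ees et la lin\'earit\'e des $C_{\gamma}$ donnent $Dg_{\widehat{\beta}}(0)=C_{\gamma}^{-1}(\sigma(\widehat{\beta})) \circ Df_x(0) \circ C_{\gamma}(\widehat{\beta})$, la d\'eriv\'ee $Df_x$ \'etant \'evalu\'ee en $C_{\gamma}(\widehat{\beta})(0)=0$. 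Comme $A(\widehat{\beta})=Df_x(0)$ par d\'efinition du cocycle, on retrouve exactement $A_{\gamma}(\widehat{\beta})=C_{\gamma}^{-1}(\sigma(\widehat{\beta})) A(\widehat{\beta}) C_{\gamma}(\widehat{\beta})$ du th\'eor\`eme \ref{pesin}.

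Pour le point 3, on a par d\'efinition $h(w)=g_{\widehat{\beta}}(w)-Dg_{\widehat{\beta}}(0)w$, donc $Dh(w)=Dg_{\widehat{\beta}}(w)-Dg_{\widehat{\beta}}(0)$. J'\'ecrirais alors
$$Dh(w)=\int_0^1 D^2 g_{\widehat{\beta}}(tw) \cdot w \, dt,$$
ce qui ram\`ene le probl\`eme au contr\^ole de la d\'eriv\'ee seconde de $g_{\widehat{\beta}}$ le long du segment $[0,w]$. Comme $C_{\gamma}^{-1}(\sigma(\widehat{\beta}))$ et $C_{\gamma}(\widehat{\beta})$ sont lin\'eaires, la d\'eriv\'ee seconde de la compos\'ee $g_{\widehat{\beta}}=C_{\gamma}^{-1}(\sigma(\widehat{\beta})) \circ f_x \circ C_{\gamma}(\widehat{\beta})$ se factorise par $D^2 f_x$, et l'on obtient
$$\|D^2 g_{\widehat{\beta}}(v)\| \leq \|C_{\gamma}^{-1}(\sigma(\widehat{\beta}))\| \, \|C_{\gamma}(\widehat{\beta})\|^2 \, \|D^2 f_x(C_{\gamma}(\widehat{\beta})v)\|.$$

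Il reste \`a majorer $\|D^2 f_x\|$. Puisque $f_x=\tau_{f(x)}^{-1} \circ f \circ \tau_x$ et que les cartes $\tau_x$ ont leurs d\'eriv\'ees premi\`ere et seconde born\'ees par une constante ind\'ependante de $x$, la majoration $\|Df(x)\|+\|D^2f(x)\| \leq C \, dist(f, \mathcal{M})^{-p}$ rappel\'ee ci-dessus fournit, quitte \`a absorber les constantes des cartes dans $C$, une borne de la forme $\|D^2 f_x\| \leq C \, dist(f, \mathcal{M})^{-p}$. Le point d\'elicat, qui est le c\oe ur de la preuve, est que cette derni\`ere majoration est a priori ponctuelle (valable pr\`es de $x$) alors qu'il faut l'appliquer en chacun des points $\tau_x(C_{\gamma}(\widehat{\beta})tw)$ du segment : il faut donc v\'erifier que l'estim\'ee de \cite{Det} et \cite{DiDu} reste valable sur une boule autour de $x$ de rayon comparable \`a $dist(f, \mathcal{M})^{p}$, la taille de la r\'egion d'holomorphie de $f$ autour de $x$ \'etant contr\^ol\'ee par la distance \`a $\mathcal{M}$. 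C'est pr\'ecis\'ement cette contrainte qui impose la restriction $\|w\| \leq \frac{\epsilon_1 dist(f, \mathcal{M})^p}{C \|C_{\gamma}(\widehat{\beta})\|}$ : elle garantit que $\|C_{\gamma}(\widehat{\beta})tw\|$ reste assez petit, en particulier dans $B(0,\epsilon_0)$, pour que la borne uniforme sur $\|D^2 f_x\|$ s'applique. En reportant cette majoration dans l'int\'egrale, on obtient
$$\|Dh(w)\| \leq \|C_{\gamma}^{-1}(\sigma(\widehat{\beta}))\| \, \|C_{\gamma}(\widehat{\beta})\|^2 \sup_{0 \leq t \leq 1}\|D^2 f_x(C_{\gamma}(\widehat{\beta})tw)\| \, \|w\| \leq C \|C_{\gamma}^{-1}(\sigma(\widehat{\beta}))\| \, \|C_{\gamma}(\widehat{\beta})\|^2 dist(f, \mathcal{M})^{-p} \|w\|,$$
ce qui est l'in\'egalit\'e recherch\'ee, la constante $\epsilon_1$ ne d\'ependant que de $\Pp^k(\Cc)$ via $\epsilon_0$ et les constantes des cartes.
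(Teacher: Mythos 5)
Votre preuve est correcte et suit essentiellement la m\^eme d\'emarche que celle de l'article : l'auteur \'ecrit directement $Dh(w)=Dg_{\widehat{\beta}}(w)-Dg_{\widehat{\beta}}(0)$, factorise les applications lin\'eaires $C_{\gamma}^{\pm 1}$ et majore $\|Df_x(C_{\gamma}(\widehat{\beta})w)-Df_x(0)\|$ par $C\,dist(f,\mathcal{M})^{-p}\|C_{\gamma}(\widehat{\beta})\|\,\|w\|$, ce qui revient exactement \`a votre int\'egrale de la d\'eriv\'ee seconde le long du segment. Le seul point que l'article d\'etaille un peu plus explicitement est la v\'erification pr\'ealable que $g_{\widehat{\beta}}(w)$ est bien d\'efini sur la boule indiqu\'ee (l'image $f\circ\tau_x\circ C_{\gamma}(\widehat{\beta})(w)$ reste dans $B(f(x),\epsilon_1)$, domaine de $\tau_{f(x)}^{-1}$), ce que vous identifiez correctement comme le point d\'elicat.
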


\begin{proof}

Commençons par montrer que $g_{\widehat{\beta}}(w)$ est défini pour $\| w \| \leq \frac{\epsilon_1 dist(f, \mathcal{M})^p}{C \|C_{\gamma}(\widehat{\beta})\|}$.

Par construction des cartes $\tau_x$, on peut trouver $\epsilon_1$  qui ne dépend que de $\Pp^k(\Cc)$ tel que pour $x \in \Pp^k(\Cc)$ les $\tau_x$ sont définis sur $B(0, \epsilon_1)$ et les $\tau_x^{-1}$ sur $B(x, \epsilon_1)$.

Pour $\| w \| \leq \frac{\epsilon_1}{\|C_{\gamma}(\widehat{\beta})\|}$, on a 

\begin{equation*}
\begin{split}
dist(f \circ \tau_x \circ C_{\gamma}(\widehat{\beta}) (w), f(x)) &= dist(f \circ \tau_x \circ C_{\gamma}(\widehat{\beta}) (w), f \circ \tau_x(0)) \\
&\leq C dist(f,\mathcal{M})^{-p} \|C_{\gamma}(\widehat{\beta}) (w)\|
\leq  C dist(f,\mathcal{M})^{-p}   \|C_{\gamma}(\widehat{\beta})\|  \|w\|.\\
\end{split}
\end{equation*}

Le dernier terme est plus petit que $\epsilon_1$ si $\|w\| \leq \frac{\epsilon_1 dist(f, \mathcal{M})^p}{C  \|C_{\gamma}(\widehat{\beta})\|}$, ce qui signifie que $g_{\widehat{\beta}}(w)$ est défini pour de tels $w$.

Passons à la preuve de la proposition.

Le point $1$ est évident et le point $2$ découle du théorème précédent.

Pour le troisième point:

$Dg_{\widehat{\beta}}(w)=D g_{\widehat{\beta}}(0) + Dh(w)$, d'où pour $\| w \| \leq \frac{\epsilon_1 dist(f, \mathcal{M})^p}{C \|C_{\gamma}(\widehat{\beta})\|}$

\begin{equation*}
\begin{split}
\| Dh(w) \|&= \|Dg_{\widehat{\beta}}(w)-D g_{\widehat{\beta}}(0)\|\\
&=\| C_{\gamma}^{-1}(\sigma(\widehat{\beta})) \circ Df_x(C_{\gamma}(\widehat{\beta}) (w)) \circ C_{\gamma}(\widehat{\beta}) - C_{\gamma}^{-1}(\sigma(\widehat{\beta})) \circ Df_x(0) \circ C_{\gamma}(\widehat{\beta})\|\\
&\leq \| C_{\gamma}^{-1}(\sigma(\widehat{\beta}))\|  \|Df_x(C_{\gamma}(\widehat{\beta}) (w)) -Df_x(0) \| \| C_{\gamma}(\widehat{\beta}) \| \\
& \leq  C  \| C_{\gamma}^{-1}(\sigma(\widehat{\beta}))\|\|C_{\gamma}(\widehat{\beta}) \|^2 dist(f, \mathcal{M})^{-p}  \|w\|.
\end{split}
\end{equation*}

C'est ce que l'on voulait démontrer.

\end{proof}

Un dernier théorème que nous utiliserons est la transformée de graphe dans le cas non-inversible (voir \cite{Du} théorème 6.4). Dans celui-ci $B_l(0,R)$ désigne la boule de centre $0$ et de rayon $R$ dans $\Cc^l$.

\begin{theoreme}(transformée de graphe cas non-inversible){\label{graphe}}

Soient $A: \Cc^{k_1} \longrightarrow \Cc^{k_1}$, $B: \Cc^{k_2} \longrightarrow \Cc^{k_2}$ des applications linéaires avec $k=k_1 + k_2$. On suppose $A$ inversible, $\|B\| < \|A^{-1}\|^{-1}$ et on note $\xi=1 - \|B\| \|A^{-1}\| \in ]0,1]$. Soient $0 \leq \xi_0 \leq 1$ et $\delta >0$ tels que:

$$\xi_0(1- \xi)+2 \delta(1 + \xi_0) \|A^{-1}\| \leq 1  \mbox{   }\mbox{  et }$$

$$(\xi_0 \|B\|+ \delta(1 + \xi_0))(\|A^{-1}\|^{-1}-\delta(1 + \xi_0))^{-1} \leq \xi_0.$$

Soit $g: B_k(0,R_0) \longrightarrow B_k(0, R_1)$ holomorphe avec $R_0 \leq R_1$, $g(0)=0$, $Dg(0)=(A,B)$ et $\|Dg(w) - Dg(0) \| \leq \delta$ sur $B_k(0,R_0)$. On a

Si $\phi : B_{k_2}(0, R) \longrightarrow \Cc^{k_1}$ vérifie $\phi(0)=0$ et $Lip( \phi) \leq \xi_0$ pour un certain $R \leq R_0$ alors il existe $\psi:  B_{k_2} \left( 0, \frac{R}{\max(1, \|B\| + 2 \delta)} \right) \longrightarrow \Cc^{k_1}$ avec $Lip (\psi) \leq \xi_0$ et $g(\mbox{graphe}(\psi)) \subset \mbox{graphe}(\phi)$.

\end{theoreme}

Pour obtenir cet énoncé il suffit juste d'adapter un peu la preuve du point $2$ du théorème 6.4 de \cite{Du}.

\begin{remarque*}

Le théorème précédent a aussi un sens quand $k_1=0$. Dans ce cas cela revient à faire des branches inverses comme dans \cite{BD2} mais dans un cas non inversible. En voici un énoncé:

Soit $g: B_k(0,R_0) \longrightarrow B_k(0, R_1)$ holomorphe avec $R_0 \leq R_1$, $g(0)=0$, $Dg(0)=(B)$ et $\|Dg(w) - Dg(0) \| \leq \delta$ sur $B_k(0,R_0)$. Si $R \leq R_0$, on a 

$$g \left( B_k \left( 0, \frac{R}{\max(1, \|B\| + 2 \delta)} \right) \right) \subset B_k(0,R).$$

\end{remarque*}

\subsection{\bf Démonstration du théorème d'hyperbolicité}

Le but de ce paragraphe est de montrer que les exposants de Lyapounov $\lambda_1 > \cdots > \lambda_l$ définis précédemment sont supérieurs ou égaux à $\frac{\log(d)}{2}$. Cela généralise le théorème de Briend et Duval aux endomorphismes holomorphes aléatoires (voir \cite{BD1}).

Nous allons suivre la même méthode que dans \cite{Bu}, \cite{N} et \cite{Det1}.

Choisissons $\gamma > 0$ tel que $\gamma p$ soit très petit devant les différences $\lambda_i - \lambda_{i+1}$ pour $i=1, \cdots, l-1$, devant les valeurs absolues des exposants non nuls et devant $\log d$ (ici le $p$ est le même qu'au paragraphe précédent).

Nous avons montré que pour $\alpha$ presque tout $(f_1,x)$ on a 

$$h_{\alpha, \mbox{mix}}= \lim_{\epsilon \rightarrow 0} \liminf_{n \rightarrow + \infty} - \frac{1}{n} \log \mu(f_1)(B_{d_{f_1}^n}(x, \epsilon)) = k \log d.$$

On va faire quelques uniformisations de cette formule.

Soit $\Lambda_{\epsilon,n}= \{(f_1, x) \in X \mbox{  ,  } \mu(f_1)(B_{d_{f_1}^n}(x, \epsilon)) \leq e^{-kn \log d + \gamma n} \}$.

Si $\epsilon$ est assez petit on  a

\begin{equation*}
\begin{split}
\frac{4}{5} &\leq \alpha \left( \left\{ (f_1,x) \in X \mbox{  ,  } \liminf_{n \rightarrow + \infty} - \frac{1}{n} \log \mu(f_1)(B_{d_{f_1}^n}(x, \epsilon)) \geq k \log d -  \frac{\gamma}{2} \right\} \right)\\
& \leq \alpha(\cup_{n_0} \cap_{n \geq n_0} \Lambda_{\epsilon,n}).
\end{split}
\end{equation*}

En particulier si $n_0$ est grand on  a $ \alpha(\cap_{n \geq n_0} \Lambda_{\epsilon,n}) \geq 3/4$.

Rappelons que l'on note $\widehat{\Gamma}$ l'ensemble des bons points pour la théorie de Pesin de la mesure $\widehat{\alpha}$ (voir le théorème \ref{pesin}). On considère (toujours avec les notations de ce théorème)

$$\widehat{\Gamma}_{\alpha_0}= \left\{ \widehat{\beta} \in \widehat{\Gamma} \mbox{  ,  } \alpha_0 \leq \| C_{\gamma}(\widehat{\beta})^{\pm 1} \| \leq \frac{1}{\alpha_0} \right\}.$$

Si $\alpha_0$ est assez petit, on a $\widehat{\alpha}(\widehat{\Gamma}_{\alpha_0}) \geq 3/4$ d'où

$$\alpha(\pi(\widehat{\Gamma}_{\alpha_0}) \cap  (\cap_{n \geq n_0} \Lambda_{\epsilon,n}))= \int \mu(f_1)(\pi(\widehat{\Gamma}_{\alpha_0}) \cap  (\cap_{n \geq n_0} \Lambda_{\epsilon,n} )\cap \{f_1\}\times \Pp^k) d \Lambda(f_1) \geq \frac{1}{2}.$$

On obtient ainsi l'existence de $f_1$ avec $\mu(f_1)(A_{n_0}) \geq 1/2$ où 

$$A_{n_0}=\{ x \in \Pp^k(\Cc) \mbox{ avec } (f_1,x) \in \pi(\widehat{\Gamma}_{\alpha_0}) \cap  (\cap_{n \geq n_0} \Lambda_{\epsilon,n} ) \}$$

 (car on a continué l'identification entre $\{f_1\}\times \Pp^k$ et $\Pp^k$). On peut aussi supposer que $f_1 \in A$ car $A$ est de mesure pleine pour $\Lambda$.

Les points $x \in A_{n_0}$ vérifient $\mu(f_1)(B_{d_{f_1}^n}(x, \epsilon)) \leq e^{-k n\log d + \gamma n}$ pour tout $n \geq n_0$. On peut donc trouver $x_1, \cdots , x_N$ dans $A_{n_0}$ qui sont $(n, \epsilon, f_1)$ séparés avec $N \geq \frac{1}{2} e^{k n\log d - \gamma n}$. Comme les $x_i$ sont dans $A_{n_0}$, il existe $\widehat{\beta_i} \in \widehat{\Gamma}_{\alpha_0}$ bon point de Pesin avec $\pi(\widehat{\beta_i})=(f_1, x_i)$. On va donc pouvoir appliquer la théorie de Pesin à partir du point $x_i$: en particulier on va construire des variétés stables approchées en chacun de ces points. Ces variétés stables vont être produites grâce à la transformée de graphe (voir le théorème \ref{graphe}).

\subsection{\bf{Construction des variétés stables}}

Soit $\xi_0 >0$ très petit devant $\alpha_0$. Dans toute la suite $n$ est pris grand par rapport à $\gamma$ et $\xi_0$.

Soit $x$ un des $x_i$ précédents et $\widehat{\beta} \in \widehat{\Gamma}_{\alpha_0}$ qui s'envoie sur $(f_1,x)$ par $\pi$. On note $E_1(\widehat{\beta}), \cdots, E_l(\widehat{\beta})$ les sous-espaces vectoriels donnés par la théorie de Pesin dans le théorème \ref{pesin}. Si $\lambda_l \leq 0$, on pose $E_s(\widehat{\beta})= \oplus_{i=m}^{l} E_i(\widehat{\beta})$ et $E_u(\widehat{\beta})= \oplus_{i=1}^{m-1} E_i(\widehat{\beta})$ où $\lambda_1 > \cdots > \lambda_{m-1} > 0 \geq \lambda_{m} > \cdots > \lambda_l$ (éventuellement on a $E_u(\widehat{\beta})= \{0\}$ si $\lambda_1$ est négatif). Si $\lambda_l > 0$ on posera $E_s(\widehat{\beta})=E_l(\widehat{\beta})$ et $E_u(\widehat{\beta})=  \oplus_{i=1}^{l-1} E_i(\widehat{\beta})$ (éventuellement on a $E_u(\widehat{\beta})= \{0\}$ si $l=1$) .

Maintenant, on se place dans le repère

$$C_{\gamma}^{-1}(\sigma^{n-1}(\widehat{\beta})) E_u(\sigma^{n-1}(\widehat{\beta})) \oplus C_{\gamma}^{-1}(\sigma^{n-1}(\widehat{\beta}))E_s(\sigma^{n-1}(\widehat{\beta}))$$

et on part de $\{0\}^{d_u} \times B(0, e^{- 4 \gamma n p})$ où $d_u$ est la dimension de $E_u(\sigma^{n-1}(\widehat{\beta}))$ et $B(0, e^{- 4 \gamma n p})$ est la boule de centre $0$ et de rayon $e^{- 4 \gamma n p}$ dans $\Cc^{k-d_u}$. Cet ensemble est un graphe $(\Phi_{n-1}(Y),Y)$ au-dessus d'une partie de $C_{\gamma}^{-1} (\sigma^{n-1}(\widehat{\beta}))E_s(\sigma^{n-1}(\widehat{\beta}))$ (avec $\Phi_{n-1}(Y)=0$).

On va tirer en arrière ce graphe en utilisant la transformée de graphe dans le cas non inversible du théorème \ref{graphe}.

\begin{lemme}

Il existe un graphe $(\Phi_{n-2}(Y),Y)$ au-dessus de $B(0, e^{- 4 \gamma n p - 2 \gamma }) \subset C_{\gamma}^{-1}(\sigma^{n-2}(\widehat{\beta}))E_s(\sigma^{n-2}(\widehat{\beta}))$ si $\lambda_l \leq 0$ ou au-dessus de $B(0, e^{- 4 \gamma n p - \lambda_l - 2 \gamma }) \subset C_{\gamma}^{-1}(\sigma^{n-2}(\widehat{\beta}))E_s(\sigma^{n-2}(\widehat{\beta}))$ si $\lambda_l >0$ avec $\mbox{Lip } \Phi_{n-2} \leq \xi_0$ et $g_{\sigma^{n-2}(\widehat{\beta})}(\mbox{graphe de } \Phi_{n-2}) \subset \mbox{graphe de } \Phi_{n-1}$.

\end{lemme}

\begin{proof}

Par la proposition \ref{proposition15}, dans le repère

$$C_{\gamma}^{-1}(\sigma^{n-2}(\widehat{\beta})) E_u(\sigma^{n-2}(\widehat{\beta})) \oplus C_{\gamma}^{-1}(\sigma^{n-2}(\widehat{\beta}))E_s(\sigma^{n-2}(\widehat{\beta})),$$

on peut écrire $g_{\sigma^{n-2}(\widehat{\beta})}$ sous la forme

$$g_{\sigma^{n-2}(\widehat{\beta})}(X,Y)=(A_{n-2}X + R_{n-2}(X,Y), B_{n-2}Y + U_{n-2}(X,Y))$$

avec: 

$$Dg_{\sigma^{n-2}(\widehat{\beta})}(0)=A_{\gamma}(\sigma^{n-2}(\widehat{\beta}))=(A_{n-2},B_{n-2})$$

et

\begin{equation*}
\begin{split}
&\max(\|DR_{n-2}(X,Y)\|,\|DU_{n-2}(X,Y)\|) \\
&\leq C dist(f_{n-1}, \mathcal{M})^{-p}  \| C_{\gamma}^{-1}(\sigma^{n-1}(\widehat{\beta}))\|\|C_{\gamma}(\sigma^{n-2}(\widehat{\beta})) \|^2 \|(X,Y)\| \\
\end{split}
\end{equation*}

pour $\|(X,Y)\| \leq \frac{\epsilon_1 dist(f_{n-1}, \mathcal{M})^p}{C \|C_{\gamma}(\sigma^{n-2}(\widehat{\beta}))\|}$ (ici $f_{n-1}=F^{n-2}(f_1)$).

On veut utiliser la transformée de graphe dans le cas non inversible (voir le théorème \ref{graphe}).

Pour cela, plaçons nous dans un premier temps dans le cas où $d_u > 0$. On a $A_{n-2}$ qui est inversible car les exposants associés à $E_u$ ne valent pas $- \infty$. Par ailleurs, par le théorème \ref{pesin} 

$$\|B_{n-2}\| \leq  e^{\gamma} \mbox{  et  } \|A_{n-2}^{-1}\|^{-1} \geq e^{\lambda_{m-1}- \gamma}$$

dans le cas où $\lambda_l \leq 0$ et 

$$\|B_{n-2}\| \leq  e^{\lambda_l + \gamma} \mbox{  et  }  \|A_{n-2}^{-1}\|^{-1} \geq e^{\lambda_{l-1}- \gamma}$$

 si $\lambda_l > 0$. 

En prenant les notations du théorème \ref{graphe} et en utilisant le fait que $\gamma$ est choisi petit par rapport aux différences entre les exposants de Lyapounov et par rapport aux valeurs absolues des exposants non nuls, on en déduit que 

$$1- \xi=\|B_{n-2}\| \|A_{n-2}^{-1}\| \leq e^{- \lambda_{m-1}+ 2 \gamma} \leq e^{-\gamma} < 1$$

dans le premier cas et

$$1- \xi=\|B_{n-2}\| \|A_{n-2}^{-1}\| \leq e^{\lambda_l - \lambda_{l-1}+ 2 \gamma}  \leq e^{-\gamma}  < 1$$

dans le second.

Estimons maintenant le $\delta$ du théorème \ref{graphe}. On a  

$$\| Dg_{\sigma^{n-2}(\widehat{\beta})}(0)-Dg_{\sigma^{n-2}(\widehat{\beta})}(w)\| \leq C dist(f_{n-1}, \mathcal{M})^{-p}  \| C_{\gamma}^{-1}(\sigma^{n-1}(\widehat{\beta}))\|\|C_{\gamma}(\sigma^{n-2}(\widehat{\beta})) \|^2 \| w \| $$

pour $\|w\| \leq \frac{\epsilon_1 dist(f_{n-1}, \mathcal{M})^p}{C \|C_{\gamma}(\sigma^{n-2}(\widehat{\beta}))\|}$. 

Comme $f_1 \in A$, on a par le lemme $9$ de \cite{Det} $dist(f_n, \mathcal{M}) \geq  c(f_1) e^{-\gamma n}$ pour tout $n \geq 1$. De plus, $\|C_{\gamma}^{\pm 1}\|$ étant tempérées, on peut supposer que

$$\| C_{\gamma}^{-1}(\sigma^{n-1}(\widehat{\beta}))\|\|C_{\gamma}(\sigma^{n-2}(\widehat{\beta})) \|^2 \leq \frac{1}{\alpha_0^3} e^{3 \gamma n}.$$

On en déduit que 

$$\| Dg_{\sigma^{n-2}(\widehat{\beta})}(0)-Dg_{\sigma^{n-2}(\widehat{\beta})}(w)\| \leq e^{- \gamma n }$$

si $\| w \| \leq e^{- 2 \gamma n p}$ et $n$ est assez grand car on a supposé que $p \geq 5$.

Pour $n$ assez grand, le $\delta$ du théorème \ref{graphe} est donc aussi petit que l'on veut et on peut appliquer ce théorème avec $R=e^{- 4 \gamma n p}$ et $R_0=e^{- 2 \gamma n p}$ pour obtenir un graphe $(\Phi_{n-2}(Y),Y)$ qui est défini sur 

$$ B \left( 0, \frac{e^{- 4 \gamma n p}}{\max(1, \|B_{n-2}\| + 2 \delta)} \right)  \subset C_{\gamma}^{-1}(\sigma^{n-2}(\widehat{\beta}))E_s(\sigma^{n-2}(\widehat{\beta})) =\Cc^{k - d_u}$$

avec $Lip(\Phi_{n-2}) \leq \xi_0$ et $g_{\sigma^{n-2}(\widehat{\beta})}(\mbox{graphe de } \Phi_{n-2}) \subset \mbox{graphe de } \Phi_{n-1}$.

En utilisant les estimées sur $\|B_{n-2}\|$ précédentes et en prenant $n$ grand par rapport à $\gamma$, on obtient le lemme dans le cas où $d_u > 0$.

Si $d_u = 0$, il n'y a pas de $A_{n-2}$ et on est dans la situation de la remarque qui suit le théorème \ref{graphe}. On prend $R= e^{- 4 \gamma n p}$ et $R_0=e^{- 2 \gamma n p}$ et on a

$$g_{\sigma^{n-2}(\widehat{\beta})} \left( B \left( 0, \frac{e^{- 4 \gamma n p}}{\max(1, \|B_{n-2}\| + 2 \delta)} \right) \right) \subset B(0, e^{- 4 \gamma n p}).$$

Quand $\lambda_1 \leq 0$, on a $\|B_{n-2} \| \leq e^{\gamma}$ et pour $n$ assez grand

$$g_{\sigma^{n-2}(\widehat{\beta})} \left( B \left( 0, e^{- 4 \gamma n p- 2 \gamma} \right) \right) \subset B(0, e^{- 4 \gamma n p})$$

et cela donne le lemme dans ce cas.

Quand $\lambda_1 > 0$, cela signifie que $l=1$ (car $d_u=0$). On a $\|B_{n-2} \| \leq e^{\lambda_l + \gamma}$ et pour $n$ assez grand

$$g_{\sigma^{n-2}(\widehat{\beta})} \left( B \left( 0, e^{- 4 \gamma n p-\lambda_l- 2 \gamma} \right) \right) \subset B(0, e^{- 4 \gamma n p})$$

et cela donne aussi le lemme dans ce dernier cas.

\end{proof}

Maintenant on recommence ce que l'on vient de faire avec $g_{\sigma^{n-3}(\widehat{\beta})}$ à la place de $g_{\sigma^{n-2}(\widehat{\beta})}$. On se place toujours dans une boule $\|w\| \leq e^{-2 \gamma np}$ de sorte à avoir le $\delta$ plus petit que $e^{- \gamma n}$ et on prend $R=e^{- 4 \gamma n p - 2 \gamma }$ ou $R=e^{- 4 \gamma n p -\lambda_l- 2 \gamma }$ suivant le cas où l'on se trouve. On obtient ainsi un graphe $(\Phi_{n-3}(Y),Y)$ au-dessus de $B(0, e^{- 4 \gamma n p - 4 \gamma }) \subset C_{\gamma}^{-1}(\sigma^{n-3}(\widehat{\beta}))E_s(\sigma^{n-3}(\widehat{\beta}))$ si $\lambda_l \leq 0$ ou au-dessus de $B(0, e^{- 4 \gamma n p - 2\lambda_l - 4 \gamma }) \subset C_{\gamma}^{-1}(\sigma^{n-3}(\widehat{\beta}))E_s(\sigma^{n-3}(\widehat{\beta}))$ si $\lambda_l >0$. Ce graphe vérifie $\mbox{Lip } \Phi_{n-3} \leq \xi_0$ et $g_{\sigma^{n-3}(\widehat{\beta})}(\mbox{graphe de } \Phi_{n-3}) \subset \mbox{graphe de } \Phi_{n-2}$. 

On continue ainsi le procédé. A la fin on obtient un graphe $(\Phi_0(Y),Y)$ au-dessus $B(0, e^{- 4 \gamma n p - 2n \gamma }) \subset C_{\gamma}^{-1}(\widehat{\beta})E_s(\widehat{\beta})$ si $\lambda_l \leq 0$ ou au-dessus de $B(0, e^{- 4 \gamma n p - n \lambda_l - 2n \gamma }) \subset C_{\gamma}^{-1}(\widehat{\beta})E_s(\widehat{\beta})$ si $\lambda_l >0$. Ce graphe vérifie $\mbox{Lip } \Phi_{0} \leq \xi_0$ et $g_{\widehat{\beta}}(\mbox{graphe de } \Phi_{0}) \subset \mbox{graphe de } \Phi_{1}$. Son image par $\tau_x \circ C_{\gamma}(\widehat{\beta})$ est la variété stable approchée de $x$ que l'on cherchait. On la notera $W_s(x)$.

Ces variétés stables vérifient la propriété suivante:

\begin{lemme}

Pour $l=0, \cdots , n-1$ on a  

$$f_l \circ \cdots \circ f_1(W_s(x)) \subset B(f_l \circ \cdots \circ f_1(x), e^{- \gamma n}).$$

Ici $f_l \circ \cdots \circ f_1=Id$ pour $l=0$ par convention.

\end{lemme}

\begin{proof}

On a par construction que pour $l=0, \cdots, n-2$

$$g_{\sigma^{l}(\widehat{\beta})}  \circ \cdots \circ g_{\widehat{\beta}}(\mbox{graphe de }  \Phi_0) \subset            
\mbox{graphe de } \Phi_{l+1} \subset B(0, e^{- 2 \gamma n p}).$$

Mais $\sigma^{l}(\widehat{\beta})$ se projette par $\pi$ sur $\tau^{l}(\beta)=(f_{l+1}, f_l \circ \cdots f_1(x))$ d'où

\begin{equation*}
\begin{split}
g_{\sigma^{l}(\widehat{\beta})}  \circ \cdots \circ g_{\widehat{\beta}} &= C_{\gamma}^{-1}(\sigma^{l+1}(\widehat{\beta})) \circ (f_{l+1})_{f_l \circ \cdots \circ f_1(x)} \circ \cdots \circ (f_1)_x \circ C_{\gamma}(\widehat{\beta})\\
&=C_{\gamma}^{-1}(\sigma^{l+1}(\widehat{\beta})) \circ \tau_{f_{l+1} \circ \cdots \circ f_1(x)}^{-1} \circ f_{l+1} \circ \cdots \circ f_1 \circ \tau_x \circ C_{\gamma}(\widehat{\beta}).
\end{split}
\end{equation*}

On a donc

$$C_{\gamma}^{-1}(\sigma^{l+1}(\widehat{\beta})) \circ \tau_{f_{l+1} \circ \cdots \circ f_1(x)}^{-1} \circ f_{l+1} \circ \cdots \circ f_1(W_s(x)) \subset B(0, e^{- 2 \gamma n p})$$

qui donne bien pour $n$ grand

$$f_{l+1} \circ \cdots \circ f_1(W_s(x)) \subset B(f_{l+1} \circ \cdots \circ f_1(x), e^{- \gamma n})$$

grâce au contrôle de $\| C_{\gamma}(\widehat{\beta})\|$ par $1/ \alpha_0$, le fait que $C_{\gamma}$ est tempérée et le contrôle des dérivées premières de $\tau_y$ sur $\Pp^k(\Cc)$.

\end{proof}

\subsection{\bf{Fin de la preuve du théorème d'hyperbolicité}}

Pour chaque $x_i$ ($i=1, \cdots, N$) on a construit un graphe $\Phi_0$ au-dessus d'une partie de $C_{\gamma}^{-1}(\widehat{\beta_i})E_s(\widehat{\beta_i})$. Le volume $2(k-d_u)$-dimensionnel réel de ce graphe est minoré par $e^{2(k-d_u)(- 4 \gamma n p - 2 \gamma n) } \geq e^{- 16k \gamma n p  }$ si $\lambda_l \leq 0$ et par $e^{2(k-d_u)(- 4 \gamma n p - n \lambda_l - 2 \gamma n )} \geq e^{-2(k-d_u)n \lambda_l - 16 k \gamma np }$ si $\lambda_l > 0$. Quand on prend l'image de ce graphe par $C_{\gamma}(\widehat{\beta_i})$, on obtient un graphe au-dessus d'une partie de $E_s(\widehat{\beta_i})$ dans le repère $E_u(\widehat{\beta_i}) \oplus E_s(\widehat{\beta_i})$. Si $(\Phi(Y),Y)$ est l'un d'eux, on a $Lip (\Phi) \leq \frac{\xi_0}{\alpha_0^2}$ qui est aussi petit que l'on veut car on a pris $\xi_0$ petit devant $\alpha_0$. Quitte à remplacer $N$ par $N/K$ où $K$ est une constante qui ne dépend que de $\Pp^k(\Cc)$, on peut supposer que tous ces graphes vivent dans une carte fixée $\psi:U \longrightarrow \Pp^k(\Cc)$ et que les $x_i$ sont à distance au moins $\epsilon_0$ du bord de $U$ (cela siginifie que $\tau_{x_i}$ est égal à $\psi$ modulo une translation). Toujours quitte à remplacer $N$ par $N/K$, on peut supposer que les graphes précédents sont des graphes au-dessus d'un plan complexe $P$ de dimension $k-d_u$ pour la projection orthogonale et que la projection de chaque graphe sur $P$ a un volume supérieur à $e^{- 16k \gamma n p  }$ si $\lambda_l \leq 0$ et supérieur à $e^{-2(k-d_u)n \lambda_l - 16 k \gamma np }$ si $\lambda_l > 0$ (quitte à rediviser par une constante). 

La projection orthogonale de l'union des $N$ variétés $\psi^{-1}(W_s(x_i))$ sur $P$ a donc un volume supérieur à $N e^{- 16k \gamma n p  }$ si $\lambda_l \leq 0$ et supérieur à $ N e^{-2(k-d_u)n \lambda_l - 16 k \gamma np }$ si $\lambda_l > 0$. En particulier, comme la projection orthogonale de $U$ sur $P$ peut être supposée compacte, on peut trouver un plan complexe $L$ de dimension $d_u$ orthogonal à $P$ tel que le nombre d'intersection entre $L$ et $\cup_{i=1}^N \psi^{-1}(W_s(x_i))$ est supérieur à $N'=N e^{- 16k \gamma n p  }$ si $\lambda_l \leq 0$ et supérieur à $ N'=N e^{-2(k-d_u)n \lambda_l - 16 k \gamma np }$ si $\lambda_l > 0$ (éventuellement divisé par une contante). Remarquons juste que dans le cas où $d_u=0$, ce plan complexe n'est rien d'autre qu'un point $L$ qui est dans un nombre supérieur à $N'=N e^{- 16k \gamma n p  }$ de $\psi^{-1}(W_s(x_i))$ si $\lambda_l \leq 0$ ou dans un nombre supérieur à $ N'=N e^{-2(k-d_u)n \lambda_l - 16 k \gamma np }$ de $\psi^{-1}(W_s(x_i))$ si $\lambda_l > 0$.

Replaçons-nous dans le cas général. L'intersection entre chaque $\psi^{-1}(W_s(x_i))$ et $L$ se fait en au plus un point car ce sont des graphes au-dessus de $P$. Notons $y_1, \cdots , y_{N'}$ ces points d'intersections et pour d'alléger les notations, supposons qu'ils correspondent à $x_1, \cdots, x_{N'}$. 

Nous allons majorer $N'$ en utilisant un argument d'entropie et de volume associé à $L$. Nous obtiendrons ainsi une contradiction dans le cas où $\lambda_l \leq 0$ et la minoration de $\lambda_l $ cherchée dans l'autre cas.

Le plan complexe $L$ dans la carte correspond si on veut à un plan projectif $R$ dans $\Pp^k(\Cc)$ qui contient $\psi(L \cap U)$. Le point crucial est le suivant:

\begin{lemme}

Les points $\psi(y_1), \cdots , \psi(y_{N'})$ sont des points de $R$ qui sont $(n, \frac{\epsilon}{2}, f_1)$-séparés si $n$ est assez grand par rapport à $\epsilon$.

\end{lemme}

\begin{proof}

Soient $i,j \in \{1, \cdots , N' \}$ avec $i \neq j$. Par construction, chaque point $\psi(y_i)$ se trouve dans $R \cap W_s(x_i)$ et les points $x_i$ sont $(n, \epsilon, f_1)$ séparés. Il existe donc $0 \leq l \leq n-1$ avec $dist(f_l \circ \cdots \circ f_1(x_i) , f_l \circ \cdots \circ f_1(x_j) ) \geq \epsilon$.

Maintenant, par le lemme précédent 

$$f_l \circ \cdots \circ f_1(W_s(x_i)) \subset B(f_l \circ \cdots \circ f_1(x_i), e^{- \gamma n})$$

d'où $dist(f_l \circ \cdots \circ f_1(x_i), f_l \circ \cdots \circ f_1(\psi(y_i))) \leq e^{- \gamma n} < \frac{\epsilon}{4}$ pour $n$ assez grand et on a la même chose en remplaçant $i$ par $j$. 

En utilisant que la distance entre $f_l \circ \cdots \circ f_1(x_i)$ et $f_l \circ \cdots \circ f_1(x_j)$ est plus grande que $\epsilon$, on obtient donc

$$dist(f_l \circ \cdots \circ f_1(\psi(y_i)), f_l \circ \cdots \circ f_1(\psi(y_j))) > \frac{\epsilon}{2}.$$

C'est ce que l'on voulait démontrer.

\end{proof}

Remarquons qu'avec ce lemme nous avons déjà le théorème d'hyperbolicité dans le cas où $d_u=0$. En effet, comme tous les $\psi(y_i)$ sont égaux au point $\psi(L)$, ils ne peuvent pas être $(n, \frac{\epsilon}{2}, f_1)$-séparés. On a donc $N' \leq 1$. Comme $N \geq \frac{1}{2} d^{kn} e^{- \gamma n}$, on obtient ainsi une contradiction dans le cas où $\lambda_l \leq 0$ et lorsque $\lambda_l >0$ (i.e. $l=1$ car $d_u=0$), on a

$$\frac{1}{2} d^{kn} e^{ - \gamma n} e^{-2kn \lambda_l - 16 k \gamma np } \leq N' \leq 1$$

ce qui donne bien $\lambda_l \geq \frac{\log d}{2}$.

On suppose donc dans la suite que $d_u >0$ et il reste à majorer le cardinal d'un ensemble $(n, \frac{\epsilon}{2}, f_1)$-séparés dans $R$ pour obtenir la majoration de $N'$. Pour cela, on va utiliser la méthode de Gromov (voir \cite{Gr}) comme dans la preuve du théorème \ref{Gromov}.

Pour $i=1, \cdots, N'$, les $\psi(y_i) \in R$ donnent des points

$$Y_i=(\psi(y_i), f_1(\psi(y_i)), \cdots, f_{n-1} \circ \cdots \circ  f_1(\psi(y_i)))$$

qui sont $\frac{\epsilon}{2}$-séparés dans $(\Pp^k(\Cc))^n$. Les boules $B(Y_i, \frac{\epsilon}{4})$ sont donc disjointes et si on note

$$\Gamma_n:=\{(x, f_1(x), \cdots , f_{n-1} \circ \cdots \circ f_1(x)), x\in R \},$$

le théorème de Lelong implique que le volume de $\Gamma_n$ intersecté avec une boule $B(Y_i, \frac{\epsilon}{4})$ est minoré par $C(d_u) \left( \frac{\epsilon}{4} \right)^{2 d_u}$. Cela donne une minoration du volume de $\Gamma_n$ par $C(d_u) \left( \frac{\epsilon}{4} \right)^{2 d_u} N'$. 

On va maintenant majorer ce volume comme dans la preuve du théorème \ref{Gromov}, en utilisant la cohomologie des $f_i$.

Soit $\omega_n=\sum_{i=1}^n \pi_i^* \omega$ où $\pi_i$ ($i=1, \cdots , n$) est la projection de $(\Pp^k(\Cc))^n$ sur sa $i$-ème coordonnée et $\omega$ est la forme de Fubini-Study de $\Pp^k(\Cc)$. Le volume de $\Gamma_n$ est égal à

\begin{equation*}
\begin{split}
\int_{\Gamma_n} \omega_n^{d_u} &= \sum_{1 \leq n_1 , \cdots , n_{d_u} \leq n} \int_{\Gamma_n} \pi_{n_1}^* \omega \wedge \cdots \wedge \pi_{n_{d_u}}^* \omega \\
&=\sum_{1 \leq n_1 , \cdots , n_{d_u} \leq n} \int_{R} (f_{n_1 -1} \circ \cdots \circ f_1)^* \omega \wedge \cdots \wedge (f_{n_{d_u} -1} \circ \cdots \circ f_1)^* \omega.\\
\end{split}
\end{equation*}

Comme $f_1$ est dans $A$, les $f_i$ sont des endomorphismes holomorphes de degré $d$. En particulier, $(f_{n_l -1} \circ \cdots \circ f_1)^* \omega$ est cohomologue à $d^{n_l -1} \omega$ et alors

$$\int_{\Gamma_n} \omega_n^{d_u} =\sum_{1 \leq n_1 , \cdots , n_{d_u} \leq n} d^{n_1 + \cdots + n_{d_u}}d^{-d_u} \leq n^{d_u} d^{-d_u} d^{nd_u}.$$

On obtient ainsi

$$N' \leq \frac{n^{d_u} d^{-d_u} d^{nd_u}}{C(d_u) \left( \frac{\epsilon}{4} \right)^{2d_u}}.$$

Si on fait le bilan, dans le cas où $\lambda_l \leq 0$ on a pour $n$ grand

$$\frac{1}{2} d^{kn}  e^{- \gamma n} e^{-16 k \gamma n p} \leq N e^{-16 k \gamma n p}=N' \leq \frac{n^{d_u} d^{-d_u} d^{nd_u}}{C(d_u) \left( \frac{\epsilon}{4} \right)^{2d_u}}$$

qui est absurde car $d_u < k$ et dans le cas où $\lambda_l >0$, on a pour $n$ grand

$$\frac{1}{2} d^{kn} e^{ - \gamma n} e^{-2(k-d_u)n\lambda_l-16 k \gamma n p} \leq N e^{-2(k-d_u)n\lambda_l-16 k \gamma n p}=N' \leq \frac{n^{d_u} d^{-d_u} d^{nd_u}}{C(d_u) \left( \frac{\epsilon}{4} \right)^{2d_u}}$$

ce qui donne bien que $\lambda_l \geq \frac{\log d}{2}$.

\newpage

\bigskip

\bigskip\noindent
Henry De Thélin, Université Paris 13, Sorbonne Paris Cité, LAGA, CNRS (UMR 7539), F-93430, Villetaneuse, France.\\
 {\tt dethelin@math.univ-paris13.fr}

\end{document}